\theoremstyle{plain}
\newtheorem{theorem}{Theorem}[section]
\newtheorem{lemma}{Lemma}[section]
\numberwithin{equation}{section}
\theoremstyle{definition}
\newtheorem{remark}{Remark}[section]
\def\cR{{\cal R}}
\def\Re {\cR e }
\begin{document}

\author{ Hai-E Zhang$^{1,2}$\qquad
Gen-Qi Xu$^{1}$\qquad Zhong-Jie Han$^{1}$\thanks{
Z. J. Han(Corresponding author): E-mail: zjhan@tju.edu.cn}\\
\\
{\normalsize 1.\ School of Mathematics, Tianjin University, Tianjin 300350, P. R. China}\\
{\normalsize 2.\ Department of Basic Science, Tangshan University, Tangshan 063000, P. R. China} }
\date{}
\title{\textbf{\  Stability of Multi-dimensional  Nonlinear Piezoelectric Beam with Viscoelastic Infinite Memory}\thanks{%
This work was supported by the National Natural Science Foundation of China NSFC-61773277, 62073236.}} \maketitle

\begin{abstract}
The long time behavior of a kind of fully magnetic effected nonlinear multi-dimensional piezoelectric beam with viscoelastic infinite memory is considered. The well-posedness of this nonlinear coupled PDEs' system is showed by mean of the semigroup theories and Banach fixed point theorem.  Based on frequency domain analysis,  it is proved that the corresponding coupled linear system can be indirectly stabilized exponentially by only one viscoelastic  infinite memory term, which is located on one equation of these strongly coupled PDEs.  Then the exponential decay of the solution to the nonlinear coupled PDEs' system  is established by the energy estimation method under certain conditions.

\textbf{Keywords}: Exponential stability, nonlinear piezoelectric beam,  viscoelastic memory, frequency domain method, fixed point theorem

\noindent {\bf Mathematics Subject Classification.} 35B35, 35B40, 93D20.

\end{abstract}

\section{Introduction}

\noindent

{Let $\Omega\subset\mathbb{R}^n$ be a bounded domain with smooth boundary $\partial \Omega$ satisfying $\partial \Omega=\Gamma_0\cup \Gamma_1,\; \Gamma_0\cap \Gamma_1=\emptyset$, {and $\mathbf{n}$ be the
unit outward normal vector of $\Gamma_1$.}}

In the present work, we consider the fully dynamic magnetic effect on the model of  piezoelectric beam with only one  viscoelastic memory involved, whose dynamic behavior is described by elasticity equation and charge equation coupling via the piezoelectric constants, which is given as follows.
\begin{equation}\label{sys1}
\left\{
\begin{array}{ll}
\rho v_{tt}(x,t)=\alpha \Delta v(x,t)-\gamma\beta \Delta p(x,t)+f_1(v,p)-\int_{0}^{\infty}g(s)\Delta v(x,t-s)ds,&x\in \Omega,~t>0,\\
\mu p_{tt}(x,t)=\beta \Delta p(x,t)-\gamma\beta \Delta v(x,t)+f_2(v,p),&x\in \Omega,~t>0,\\
v(x,t)=p(x,t)=0,&x\in \Gamma_0,~t>0,\\
\alpha \frac{\partial v}{\partial \mathbf{n}}(x,t)-\gamma\beta \frac{\partial p}{\partial \mathbf{n}}(x,t)=\beta \frac{\partial p}{\partial \mathbf{n}}(x,t)-\gamma\beta \frac{\partial v}{\partial \mathbf{n}}(x,t)=0,&x\in \Gamma_1,~t>0,\\
v(x,0)=v_0(x),~~v_t(x,0)=v_1(x),\; p(x,0)=p_0(x),~p_{t}(x,0)=p_{1}(x),~&x\in\Omega,\\
v(x,-s)=h(x,s),~&x\in \Omega,~s>0,
\end{array}
\right.
\end{equation}
where  $v(x,t)$ and $p(x,t)$ are denoted by the transverse displacement of the beam and the total load of
the electric displacement along the transverse direction at each point $x\in\Omega$, respectively. $v_0, v_1, p_0, p_1$ are the given initial data.  The  coefficients $\rho, \alpha, \gamma, \mu, \beta>0$ are the
mass density per unit volume, elastic stiffness, piezoelectric coefficient, magnetic permeability, impermeability
coefficient of the beam, respectively and satisfy $\alpha> \gamma^2\beta$, and thus, there always exists a positive constant  $\alpha_1$ such that $\alpha_1=\alpha-\gamma^2\beta$. The functions $f_i(v,p),\; i=1,2$ and $h(x,s)$ are nonlinear source terms and memory history function, respectively. $g(s)$ is the memory kernel function (also named as  ``relaxation function"),
{and the following assumptions on $g(s)$ are imposed:

(A1)  $g\in L^1(\mathbb{R}_+)\cap H^1(\mathbb{R_+})$ satisfies {$0<\zeta:=\int_{0}^{\infty}g(s)ds<\alpha_1$} and $g(s)>0$ for $s\in \mathbb{R}_+$;

(A2) For any $s\in \mathbb{R}_+$, $g'(s)<0$ and there exist two constants $k_0>0$ and $k_1>0$ such that {$-k_0g(s)\leq g'(s)\leq-k_1g(s)$.}

The piezoelectric material, as a kind of multi-functional smart material, is tremendously applied in industrial fields in recent years, such as ultrasonic welders, micro-sensors, inchworm robots,  wearable human-machine interfaces  and so on. For more details, we refer  to the typical works \cite{cd,ma1,ma2,anderson1,anderson2,smith} and the references therein. It is worth mentioning that based on Hamilton's principle, Morris and \"{O}zer in \cite{ma1} first proposed the theory of piezoelectric beam with  fully magnetic effect and derived a kind of PDEs' dynamical model:
 \begin{equation}\label{morri}
\left\{
\begin{array}{ll}
\rho v_{tt}(x,t)=\alpha v_{xx}(x,t)-\gamma\beta p_{xx}(x,t),&x\in (0,L),~t>0,\\
\mu p_{tt}(x,t)=\beta p_{xx}(x,t)-\gamma\beta v_{xx}(x,t),&x\in (0,L),~t>0,\\
v(0,t)=p(0,t)=0,&t>0,\\
\alpha v_x(L,t)-\gamma\beta p_x(L,t)=0,&t>0,\\
\beta p_x(L,t)-\gamma \beta v_x(L,t)=-\frac{V(t)}{h},&t>0,
\end{array}
\right.
\end{equation}
 where $V(t)$ denotes the voltage applied at the electrodes. This proposed model aims to compensate for  the deficiency of the traditional piezoelectric beam theory dominated by Maxwell's equation which ignores the dynamic interactions of electro-magnetism, and the electromagnetic effect must be taken into account in some situations, as mentioned in reference \cite{yang}. Recently, based on its abundant applications in industrial fields, the long time behavior  of the piezoelectric beam system with various damping mechanisms has been attracted by many scholars. In \cite{ma2, ma3}, Morris and \"{O}zer proved that the system can be strongly stabilized, but not exponentially stable if only one frictional damping {is} actuated on either of these two boundaries. However,  it was proved in \cite{ma1} that the system can achieve exponential stability if the frictional {damping}  are actuated on both boundaries simultaneously.} Compared to boundary damping, \cite{am4} showed that only one internal frictional damping  acting on either of these two equations is sufficient to stabilize this coupled system exponentially. Different from the above mentioned literature, we replace the frictional damping by a viscoelastic  infinite memory term $\int_{0}^{\infty}g(s)\Delta v(x,t-s)ds$, {which is presented only in the first equation of the strongly coupled system \eqref{sys1}.  This type of memory term may also exist in thermoelastic materials in low temperature, the stability of which was discussed by  \cite{wangjunmin}}. The objective of this work is to identify to which extent,  this multi-dimensional coupled PDEs system can be ``indirectly" stabilized through a viscoelastic memory term acting only in one of these two equations.

Besides, it is a known fact that the nonlinear models can describe the natural phenomena more accurately. As a great number of extra uncertainties often occur in practical engineering, many nonlinear elastic systems (vibrating strings, beams, or others)  are proposed and aroused mathematicians and engineers' interest. For instance, Rivera et al. in \cite{Rivera} studied a nonlinear {\color{blue}system} of Timoshenko type in a one-dimensional bounded domain and investigated the exponential stability of the system.  Mustafa et al. in \cite{mus} {obtained} an explicit and general decay result of a class of nonlinear Timoshenko beam system. \cite{Kh} studied the decay rates of a one-dimensional porous-elastic beam  with infinite memory under a nonlinear damping mechanism. \cite{freitas}  studied the long time dynamics of a kind of nonlinear piezoelectric {\color{blue}beam} with fractional damping and thermal effects, and \cite{fre}  considered a nonlinear piezoelectric system with delay effect,  in which the global attractor and exponential {attractor} are studied. For other kinds of PDEs on {\color{blue}this} issue, we refer to \cite{pao} for a semilinear wave {\color{blue}equation} with viscoelastic damping and delay feedback and \cite{ch2} for two nonlinear systems including Korteweg-de Vries-Burgers and Kuramoto-Sivashinsky equations with memory, and the references therein.

In this paper, we consider the stability of a nonlinear piezoelectric beam system \eqref{sys1} with only one viscoelastic memory involved. The novelties and  contributions are mainly in twofold:

{1. Through the in-depth research, we found that there is no work on the long time behavior of the multi-dimensional nonlinear piezoelectric beam system with infinite memory terms.} Different from the existing literature on damped piezoelectric beam systems, we consider the damping mechanism by a viscoelastic memory term $\int_{0}^{\infty}g(s)\Delta v(x,t-s)ds$. Especially,  it actuates only on one equation of the coupled PDEs \eqref{sys1} and the other equation in this system is ``indirectly" stabilized through the coupling.  We first show that the corresponding linear coupled PDEs system can be indirectly stabilized exponentially by the viscoelastic infinite memory. Especially, this decay rate is irrelevant to the relationships of wave speeds $\frac{\rho}{\alpha}$ and $\frac{\mu}{\beta}$,  which is totally different from the well-known Timoshenko beam (see \cite{Ammar}, \cite{alm}). Moreover,  the exponential decay of the nonlinear system \eqref{sys1} is further proved by a careful energy estimate.

{2. For the previous works on the stability analysis of the piezoelectric beams, the energy multiplier method is used exclusively. However, the infinite memory term living in such a model leads to the difficulty in the construction of energy multipliers. In order to solve this problem, the frequency domain method is  adopted in this work  to prove the exponential decay of the piezoelectric  beam system.}  To the best of the authors' knowledge, this could be the first work that the frequency domain method is employed to discuss the stability of the multi-dimensional piezoelectric beam system with viscoelastic memory term.
Some novel frequency multipliers {\color{blue}are} developed to help us overcome the technical difficulties caused by the strong coupling characteristics and  the memory term.

The rest of this paper is organized as follows. In section 2, the preliminary assumptions and spaces are presented. In section 3, the generation of semigroup $\{e^{\mathcal{A}t}\}_{t\geq0}$ of the linear part of the system \eqref{sys1} is discussed by the semigroup theories and the well-posedness of the nonlinear system  is further dealt with by the Banach fixed point theory. Section 4 is devoted to discussing the exponential stability of  system \eqref{sys1}. Finally, a concluding remark is given in section 5.


\section{Preliminaries and problem setting}
\qquad
This section is devoted to considering system (\ref{sys1}) in an appropriate Hilbert space setting. According to the approach of Dafermos \cite{dafermos}, let us first define a new variable  $\eta(x,t,s)$ for system (\ref{sys1})
\begin{equation}
\eta(x,t,s)=v(x,t)-v(x,t-s),~x\in\Omega,~t,s>0.
\end{equation}

It is easy to verify that $\eta_t(x,t,s)=v_t(x,t)-\eta_s(x,t,s)$ and
\begin{equation}
\label{eta2}\eta(x,t,0)=0,~x\in\Omega,~t>0.
\end{equation}

 Thus, system (\ref{sys1}) can be  rewritten as follows:
\begin{eqnarray}\label{sys2}
\left\{
\begin{array}{ll}
\rho v_{tt}(x,t)=(\alpha-\zeta) \Delta v(x,t)-\gamma\beta \Delta p(x,t)+f_1(v,p)+\int_{0}^{\infty}g(s)\Delta \eta(x,t,s)ds,&x\in \Omega,~t>0,\\
\mu p_{tt}(x,t)=\beta \Delta p(x,t)-\gamma\beta \Delta v(x,t)+f_2(v,p),&x\in \Omega,~t>0,\\
\eta_t(x,t,s)=v_t(x,t)-\eta_s(x,t,s),&x\in \Omega,~t,s>0,\\
v(x,t)=p(x,t)=0,&x\in \Gamma_0,~t>0,\\
\alpha \frac{\partial v}{\partial \mathbf{n}}(x,t)-\gamma\beta \frac{\partial p}{\partial \mathbf{n}}(x,t)=\beta \frac{\partial p}{\partial \mathbf{n}}(x,t)-\gamma\beta \frac{\partial v}{\partial \mathbf{n}}(x,t)=0,&x\in \Gamma_1,~t>0,\\
v(x,0)=v_0(x),~~v_t(x,0)=v_1(x),\;p(x,0)=p_0(x),~p_{t}(x,0)=p_{1}(x),~&x\in\Omega,\\
v(x,-s)=h(x,s),~&x\in \Omega,~s>0.\\
\end{array}
\right.
\end{eqnarray}
Define $H_{\Gamma_0}^{k}(\Omega)=\left\{f\in H^{k}(\Omega)\mid f=0~\text{on}~\Gamma_0\right\}$, where $H^{k}(\Omega)$ is $k$-order Sobolev space.
{We define the space $\Xi$ by
\[
\Xi:=\left\{\eta(x,s)\biggm|\begin{array}{c}
                                                                \eta(x,s)\in H_{\Gamma_0}^1(\Omega),\eta_s(x,s)\in H_{\Gamma_0}^1(\Omega) \\
                                                                \int_{0}^{\infty}\int_{\Omega}g(s)|\nabla\eta(x,s)|^2 dsdx<\infty
                                                                \end{array}\right\}
\]}
equipped with the inner product
\[
{( \eta,\tilde{\eta})_{\Xi}}=\int_{0}^{\infty}\int_{\Omega}g(s)\nabla \eta(x,s)\cdot\nabla\overline{\tilde{\eta}}(x,s)dxds.
\]

 Choose the state space \begin{equation}\label{204}
\mathcal{H}:=H_{\Gamma_0}^{1}(\Omega)\times L^2(\Omega)\times H_{\Gamma_0}^{1}(\Omega)\times L^2(\Omega) \times\Xi\end{equation}
with
\begin{eqnarray}\label{inner-product}
&&\left\langle(v,u,p,q,\eta)^T,(\tilde{v},\tilde{u},\tilde{p},\tilde{q},\tilde{\eta})^T\right\rangle_{\mathcal{H}}\cr
&=&\int_{\Omega}(\alpha_1-\zeta)\nabla v\cdot\nabla\overline{\tilde{v}}dx+\int_{\Omega}\rho u\overline{\tilde{u}}dx+\int_{\Omega}\beta(\gamma \nabla v-\nabla p)\cdot(\gamma\nabla\overline{\tilde{v}}-\nabla\overline{\tilde{p}})dx+\int_{\Omega}\mu q\overline{\tilde{q}}dx\cr
&&+\int_{0}^{\infty}\int_{\Omega}g(s)\nabla\eta(x,s)\cdot\nabla\overline{\tilde{\eta}}(x,s)dxds.
\end{eqnarray}
Under the assumptions (A1) and (A2),  it is easy to check that $(\mathcal{H}, \langle\cdot,\cdot\rangle_{\mathcal{H}})$ is a Hilbert space.

Define the system operator $\mathcal{A}$ in $\mathcal{H}$ by
\begin{equation}
\mathcal{A}\left(
   \begin{array}{c}
     v(x) \\
     u(x)\\
     p(x) \\
     q(x)\\
     \eta(x,s)
   \end{array}
 \right)=\left(
           \begin{array}{c}
             u(x) \\
             \frac{1}{\rho}[(\alpha-\zeta) \Delta v(x)-\gamma\beta \Delta p(x)+\int_{0}^{\infty}g(s)\Delta \eta(x,s)ds]\\
             q(x) \\
             \frac{1}{\mu}(\beta \Delta p(x)-\gamma\beta \Delta v(x))\\
             u(x)-\eta_s(x,s)

           \end{array}
         \right)
\end{equation}
with domain
{
\begin{equation}\label{domain1}
\mathcal{D}(\mathcal{A})=\left\{(v,u,p,q,\eta)^T\in \big((H^{2}(\Omega)\cap H_{\Gamma_0}^{1}(\Omega))\times H_{\Gamma_0}^{1}(\Omega)\big)^2\times\Xi \biggm|\begin{array}{c}
                                       \alpha \frac{\partial v}{\partial \mathbf{n}}-\gamma\beta \frac{\partial p}{\partial \mathbf{n}}=0\\
\beta \frac{\partial p}{\partial \mathbf{n}}-\gamma\beta \frac{\partial v}{\partial \mathbf{n}}=0\\
                                      \end{array}
~\text{on}~\Gamma_1\right\}.
\end{equation}
}

这种空间设置是为了用鲁磨菲利普斯定理，如果用预解族，空间设置就会是上面取消的那种情况

The source terms are described by a nonlinear function $\mathcal{F}:\mathcal{H}\rightarrow \mathcal{H}$ defined as
\begin{equation}\label{nonlinear203}
\begin{array}{c}
\mathcal{F}(v,u,p,q,\eta)=\left(
                       0 ,
                       {\frac{1}{\rho}f_1(v,p)},
                      0,
                       {\frac{1}{\mu}f_2(v,p)},0\right)^{T}.
\end{array}
\end{equation}

Set
$
X(t)=\left(
         v(x,t),
         v_t(x,t),
         p(x,t),
         p_t(x,t),
         \eta(x,s,t)\right)^{T} \;\mbox{and}\; X_0=\left(
         v_0,
         v_1,
         p_0,
         p_1,
        \eta_0
     \right)^{T}.
$ With the above notations, problem (\ref{sys2}) can be reformulated into the following abstract evolution equation:
\begin{equation}\label{eq2.8}
\left\{
\begin{array}{ll}
\frac{dX(t)}{dt}=\mathcal{A}X(t)+\mathcal{F}(X(t)),&t>0,\\
X(0)=X_0.
\end{array}
\right.
\end{equation}


\section{Well-posedness}

\subsection{The generation of $C_0$-semigroup $e^{\mathcal{A}t}$}

\begin{theorem}\label{th3.1}
Assume that the assumptions (A1)-(A2) hold. Then 
%
%
the operator $\mathcal{A}$ generates a $C_0$ semigroup of contractions $e^{\mathcal{A}t}$ on $\mathcal{H}$.
\end{theorem}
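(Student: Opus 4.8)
The plan is to apply the Lumer--Phillips theorem, so I must establish three things: that $\mathcal{D}(\mathcal{A})$ is dense in $\mathcal{H}$, that $\mathcal{A}$ is dissipative, and that $\lambda I-\mathcal{A}$ is surjective for some $\lambda>0$. Density of the domain is routine (the domain contains the product of smooth subspaces dense in each factor together with a dense subset of $\Xi$), so the work is concentrated in dissipativity and maximality.

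For dissipativity I would take $X=(v,u,p,q,\eta)^T\in\mathcal{D}(\mathcal{A})$ and compute $\mathrm{Re}\,\langle\mathcal{A}X,X\rangle_{\mathcal{H}}$ directly from the definition of the inner product \eqref{inner-product}. The key algebraic observation is that the two boundary conditions defining $\mathcal{D}(\mathcal{A})$ are equivalent, since $\alpha_1=\alpha-\gamma^2\beta>0$, to $\frac{\partial v}{\partial\mathbf{n}}=\frac{\partial p}{\partial\mathbf{n}}=0$ on $\Gamma_1$: the second condition gives $\frac{\partial p}{\partial\mathbf{n}}=\gamma\frac{\partial v}{\partial\mathbf{n}}$, and substituting into the first yields $\alpha_1\frac{\partial v}{\partial\mathbf{n}}=0$. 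After applying Green's formula to the second- and fourth-component terms and using $u=q=0$ on $\Gamma_0$, all boundary contributions drop out. I then expect every coupling volume integral to cancel: the terms in $\mathrm{Re}(\nabla v\cdot\nabla\bar u)$ carry coefficient $(\alpha_1-\zeta)-(\alpha-\zeta)+\beta\gamma^2=0$, the terms in $\mathrm{Re}(\nabla p\cdot\nabla\bar u)$, $\mathrm{Re}(\nabla v\cdot\nabla\bar q)$ and $\mathrm{Re}(\nabla p\cdot\nabla\bar q)$ cancel against the $\beta(\gamma\nabla v-\nabla p)$ coupling term, and the two $\int_0^\infty g\,\nabla u\cdot\nabla\bar\eta$-type terms cancel as well. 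What survives is only $-\mathrm{Re}\int_0^\infty\!\int_\Omega g(s)\nabla\eta_s\cdot\nabla\bar\eta\,dx\,ds$, which after integrating by parts in $s$ (using $\eta(x,0)=0$ from \eqref{eta2} and the decay of $g$ at infinity) equals $\tfrac12\int_0^\infty\!\int_\Omega g'(s)|\nabla\eta|^2\,dx\,ds\le 0$ by (A2). Hence $\mathcal{A}$ is dissipative, the memory kernel being the sole source of dissipation.

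For maximality I would fix $\lambda>0$ and, given $Y=(f_1,\dots,f_5)^T\in\mathcal{H}$, solve $(\lambda I-\mathcal{A})X=Y$. The first and third equations give $u=\lambda v-f_1$ and $q=\lambda p-f_3$, while the fifth becomes the linear ODE $\eta_s+\lambda\eta=u+f_5$ in $s$ with $\eta(x,0)=0$, whose explicit solution $\eta(x,s)=\int_0^s e^{-\lambda(s-\sigma)}\big(u(x)+f_5(x,\sigma)\big)\,d\sigma$ expresses $\eta$ through the still-unknown $v$ and the data. Substituting these into the second and fourth equations reduces the problem to a coupled elliptic system for $(v,p)$ of the form
\[
\rho\lambda^2 v-(\alpha-\zeta+\lambda\kappa_\lambda)\Delta v+\gamma\beta\Delta p=G_1,\qquad
\mu\lambda^2 p-\beta\Delta p+\gamma\beta\Delta v=G_2,
\]
where $\kappa_\lambda=\int_0^\infty g(s)\frac{1-e^{-\lambda s}}{\lambda}\,ds\in(0,\zeta/\lambda)$ and $G_1,G_2$ are determined by the data. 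I would recast this in the weak form $B\big((v,p),(\phi,\psi)\big)=\ell(\phi,\psi)$ on $V:=H^1_{\Gamma_0}(\Omega)\times H^1_{\Gamma_0}(\Omega)$, whose gradient part reassembles as $(\alpha_1-\zeta+\lambda\kappa_\lambda)|\nabla v|^2+\beta|\gamma\nabla v-\nabla p|^2$; since $\alpha_1-\zeta>0$ and $\lambda\kappa_\lambda>0$, this together with Poincar\'e's inequality on $H^1_{\Gamma_0}(\Omega)$ yields coercivity, and Lax--Milgram furnishes a unique weak solution. The natural boundary conditions generated by dropping the $\Gamma_1$ boundary terms come out to be again exactly $\frac{\partial v}{\partial\mathbf{n}}=\frac{\partial p}{\partial\mathbf{n}}=0$, matching $\mathcal{D}(\mathcal{A})$. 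Elliptic regularity then promotes $(v,p)$ to $H^2(\Omega)$, and I recover $u,q\in H^1_{\Gamma_0}(\Omega)$ and reconstruct $\eta$, verifying via (A1)--(A2) that $\eta\in\Xi$, so that $X\in\mathcal{D}(\mathcal{A})$.

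The main obstacle will be the rigorous treatment of the infinite-memory component. One must check that the functional $\ell$ stays bounded on $V$ despite $G_1$ carrying the $H^{-1}$-type contributions $\Delta f_1$ and $\Delta F_5$ inherited from the history, and that the $\eta$ reconstructed from the ODE genuinely lies in $\Xi$, i.e.\ that $\eta,\eta_s\in H^1_{\Gamma_0}(\Omega)$ with $\int_0^\infty\!\int_\Omega g(s)|\nabla\eta|^2\,dx\,ds<\infty$; both require quantitative use of the exponential bound $-k_0 g\le g'\le -k_1 g$ in (A2) together with $g\in L^1\cap H^1$ in (A1). By comparison, the coupled elliptic solve and the dissipativity computation are largely mechanical once the cancellations above are in hand.
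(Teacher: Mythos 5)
Your proposal is correct in outline and rests on the same two pillars as the paper's proof---a dissipativity computation plus a Lax--Milgram argument feeding into Lumer--Phillips---but it executes the range condition differently, and the difference is worth recording. Your dissipativity step is exactly the paper's \eqref{E}: same reduction of the $\Gamma_1$ conditions to $\frac{\partial v}{\partial \mathbf{n}}=\frac{\partial p}{\partial \mathbf{n}}=0$, same cancellation of all coupling terms, same surviving quantity $\frac12\int_0^\infty\!\int_\Omega g'(s)|\nabla\eta|^2\,dx\,ds\le 0$. For maximality, you solve $(\lambda I-\mathcal{A})X=Y$ with $\lambda>0$: the history equation becomes an ODE in $s$ with exponential kernel, $\eta$ then depends on the unknown $v$ through $u=\lambda v-f_1$ (whence the stiffness gain $\lambda\kappa_\lambda$), and Lax--Milgram must be run on a genuinely coupled sesquilinear form whose coercivity uses $(\alpha_1-\zeta+\lambda\kappa_\lambda)|\nabla v|^2+\beta|\gamma\nabla v-\nabla p|^2$; all of this is set up correctly. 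The paper instead proves $0\in\varrho(\mathcal{A})$: at $\lambda=0$ the first, third and fifth equations give $u=\xi_1$, $q=z_1$ and $\eta=\int_0^s[\xi_1-\nu(r)]\,dr$ outright, so the memory term is pure data, the two remaining equations decouple algebraically into \eqref{w-p2-1}--\eqref{w-p4-1} via $\alpha_1=\alpha-\gamma^2\beta$, and Lax--Milgram is applied to the plain bilinear form \eqref{bilinear}; generation then follows because dissipativity together with $0\in\varrho(\mathcal{A})$ (and openness of the resolvent set) yields surjectivity of $\lambda I-\mathcal{A}$ for some $\lambda>0$, a bridging step the paper leaves implicit. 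Each route buys something: the paper's choice is algebraically lighter (no coupling, no $\kappa_\lambda$) and the fact $0\in\varrho(\mathcal{A})$ is reused later in Part II of the proof of Theorem \ref{t-4-1}; your choice plugs directly into the standard statement of Lumer--Phillips, and the damping factor $e^{-\lambda(s-\sigma)}$ actually makes your verification that the reconstructed $\eta$ lies in $\Xi$ easier than for the paper's $\eta=\int_0^s[\xi_1-\nu(r)]\,dr$, which strictly speaking needs a weighted Hardy-type inequality derived from (A2). The caveats you flag---the $H^{-1}$ character of the memory data in the right-hand side and the membership of the reconstructed $\eta$ in $\Xi$---are genuine, but they are equally present, and equally unaddressed, in the paper's own proof, as is the vanishing of the boundary term $\int_{\Gamma_1}\big(\int_0^\infty g(s)\frac{\partial\eta}{\partial\mathbf{n}}\,ds\big)\overline{u}\,d\Gamma$ in the dissipativity computation.
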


\begin{proof}
{First, we show that $\mathcal{A}$ is dissipative in $\mathcal{H}$. In fact, for any $W=(v,u,p,q,\eta)^T\in \mathcal{D}(\mathcal{A})$, a direct calculation yields that
\begin{eqnarray}\label{E}
\Re\langle\mathcal{A}W,W\rangle}_{\mathcal{H}&=&\Re\big(\int_{\Omega}(\alpha_1\!-\!\zeta)\nabla u \cdot\nabla \overline{v}dx\!+\!\int_{\Omega}\rho[\frac{1}{\rho}((\alpha\!-\!\zeta) \Delta v-\gamma\beta \Delta p\!+\!\int_{0}^{\infty}g(s)\Delta \eta(x,s)ds)]\overline{u}dx\cr
&&+\int_{\Omega}\beta(\gamma \nabla u-\nabla q)\cdot(\gamma \nabla \overline{v}-\nabla \overline{p})dx+\int_{\Omega}\mu(\frac{1}{\mu}(\beta \Delta p-\gamma\beta \Delta v))\overline{q}dx\cr
&&+\int_{0}^{\infty}\int_{\Omega}g(s)\nabla(u-\eta_s(x,s))\cdot\nabla\overline{\eta}(x,s)dxds\big)\cr
&=&\frac{1}{2}\int_{0}^{\infty}\int_{\Omega}g'(s)|\nabla \eta|^2dxds\leq 0,
\end{eqnarray}
which implies that $\mathcal{A}$ is dissipative.
}

Second, we show  $0\in \varrho(\mathcal{A})$ (where $\varrho(\mathcal{A})$ denotes the resolvent point set of $\mathcal{A}$). Indeed,
for any $(\xi_1,\xi_2,z_1,z_2,\nu)\in \mathcal{H}$, let us discuss the solvability of the equation
\begin{align}
 \label{resolvent}\mathcal{A}(v,u,p,q,\eta)^T=(\xi_1,\xi_2,z_1,z_2,\nu)^T,~~\text{for}~(v,u,p,q,\eta)^T\in \mathcal{D}(\mathcal{A}),
\end{align}
that is,
\begin{align}
 \label{w-p1}u= \xi_1,\\
 \label{w-p2}             \frac{1}{\rho}[(\alpha-\zeta) \Delta v-\gamma\beta \Delta p+\int_{0}^{\infty}g(s)\Delta \eta(x,s)ds]=\xi_2,\\
\label{w-p3}            q=z_1, \\
\label{w-p4}            \frac{1}{\mu}(\beta \Delta p-\gamma\beta \Delta v)=z_2,\\
\label{w-p5}            u-\eta_s=\nu.
\end{align}

Solving  $\eqref{w-p5}$, along with $\eqref{eta2}$ and $\eqref{w-p1}$, we have
\begin{align}
\label{w-solu}\eta=\int_{0}^{s}[\xi_1-\nu(r)]dr.
\end{align}

Substituting $\alpha_1=\alpha-\gamma^2\beta$ into $\eqref{w-p2}$, together with the assumption (A2) and $\eqref{w-p4}$, we transform $\eqref{w-p1}-\eqref{w-p5}$ into the following ones:
\begin{align}
\label{w-p2-1} &\Delta v=\frac{1}{\alpha_1-{\color{blue}\zeta}}[\gamma\mu z_2+\rho \xi_2-\int_{0}^{\infty}g(s)\Delta \eta(x,s)ds],\\
\label{w-p4-1} & \Delta p=\frac{1}{\alpha_1-{\color{blue}\zeta}}[\frac{(\alpha-{\color{blue}\zeta})\mu}{\beta}z_2+\rho \gamma \xi_2-\gamma\int_{0}^{\infty}g(s)\Delta \eta(x,s)ds].
\end{align}

Let $(\varphi,\psi)\in H_{\Gamma_0}^1(\Omega)\times H_{\Gamma_0}^1(\Omega)$. Multiplying $\eqref{w-p2-1}$ and $\eqref{w-p4-1}$ by $\overline{\varphi}$ and $\overline{\psi}$ and taking $L^2-$ inner product respectively, then integrating by parts yields that
\begin{align}
\nonumber&\int_{\Omega}\nabla v\cdot \nabla \overline{\varphi}dx+\int_{\Omega}\nabla p\cdot \nabla \overline{\psi}dx\\
\nonumber=&-\int_{\Omega}\frac{1}{\alpha_1-{\color{blue}\zeta}}[\gamma\mu z_2+\rho \xi_2-\int_{0}^{\infty}g(s)\Delta \eta(x,s)ds]\overline{\varphi}dx\\
\label{variation}&-\int_{\Omega}\frac{1}{\alpha_1-{\color{blue}\zeta}}[\frac{(\alpha-{\color{blue}\zeta})\mu}{\beta}z_2+\rho \gamma \xi_2-\gamma\int_{0}^{\infty}g(s)\Delta \eta(x,s)ds]\overline{\psi}dx.
\end{align}
In order to study the variation equation $\eqref{variation}$, we introduce the space $V^1(\Omega):=H_{\Gamma_0}^1(\Omega)\times H_{\Gamma_0}^1(\Omega)$ equipped with the norm
\[
\|(f,\widetilde{f})\|_{V^1}=(\int_{\Omega}(|\nabla f|^2+|\nabla \widetilde{f}|^2)dx)^{\frac{1}{2}}.
\]
Define the bilinear functional $B$ by
\begin{align}
\label{bilinear} &B((v,p),(\varphi,\psi))=\int_{\Omega}\nabla v\cdot \nabla \overline{\varphi}dx+\int_{\Omega}\nabla p\cdot \nabla \overline{\psi}dx.
\end{align}
It is obvious that $B$ is bounded and coercive in $V^1(\Omega)\times V^1(\Omega)$.

Introduce the functional $F$ defined on $V^1(\Omega)$ as
\begin{align}
 \nonumber F(\varphi,\psi)&=-\int_{\Omega}\frac{1}{\alpha_1-{\color{blue}\zeta}}[\gamma\mu z_2+\rho \xi_2-\int_{0}^{\infty}g(s)\Delta \eta(x,t,s)ds]\overline{\varphi}dx\\
\label{functional} &-\int_{\Omega}\frac{1}{\alpha_1-{\color{blue}\zeta}}[\frac{(\alpha-{\color{blue}\zeta})\mu}{\beta}z_2+\rho \gamma \xi_2-\gamma\int_{0}^{\infty}g(s)\Delta \eta(x,t,s)ds]\overline{\psi}dx.
\end{align}

By H\"{o}lder inequality and Poincar\'{e} inequality, simple calculation shows that $F$ is bounded in $V^1(\Omega)$. Then it follows by Lax-Millgram theorem that there is a unique solution $(v,p)\in V^1(\Omega)$ of $\eqref{variation}$. According to the result in \cite[Chapter 3, p.92]{gwess}{\color{blue},  the equation} $\eqref{resolvent}$ admits a unique solution $(v,u,p,q,\eta)^T\in \mathcal{D}(\mathcal{A})$ and hence $0\in \varrho(\mathcal{A})$.  Therefore, the well-known Lumer-Phillips theorem \cite{brez,pazy} shows that the desired result follows.
\end{proof}

{Note that the generation of $C_0$-semigroup implies the homogeneous problem (the linear part of (\ref{eq2.8})) is well-posed.}  In the following subsection, we discuss the well-posedness of the nonlinear problem (\ref{eq2.8}).

\subsection{Well-posedness of nonlinear system}
\qquad
{{We now prove a global existence result for problem \eqref{eq2.8} with sufficiently small initial conditions.} In order to ensure the solvability of the nonlinear problem \eqref{sys1}, {we assume the following hypotheses on the nonlinear source terms $f_i(v,p),i=1,2$:

(H1) $f_i(v,p)$ are locally Lipschitz continuous;

(H2) $f_i(v,p)$ fulfill the growth condition: there exist $r\geq0$ and $c>0$ such that
\begin{align}\label{li-gro}
| f_1(v,p)|\bigvee| f_2(v,p)|\leq c(|v|+|p|)(|v|^{r}+|p|^{r}+1),
\end{align}
where $|f_1|\bigvee |f_2|$ denotes the maximum of $|f_1|$ and $|f_2|$.}

Inspired by the work \cite{freitas}, we  obtain the following lemma.
\begin{lemma}\label{lip}用不用局部？
Suppose that the assumption {(H1)} holds, then $\mathcal{F}:\mathcal{H}\rightarrow \mathcal{H}$ is {locally} Lipschitz continuous.
\end{lemma}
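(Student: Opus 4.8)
The plan is to reduce the claim to an estimate for the two Nemytskii (superposition) operators $v,p\mapsto f_1(v,p)$ and $v,p\mapsto f_2(v,p)$ viewed as maps from $H^1_{\Gamma_0}(\Omega)$ into $L^2(\Omega)$. First I would fix $R>0$ and take arbitrary $W=(v,u,p,q,\eta)^T$ and $\tilde W=(\tilde v,\tilde u,\tilde p,\tilde q,\tilde\eta)^T$ with $\|W\|_{\mathcal H},\|\tilde W\|_{\mathcal H}\le R$. Since $\mathcal F$ has nonzero entries only in the second and fourth slots, namely $\frac1\rho f_1(v,p)$ and $\frac1\mu f_2(v,p)$, the weighted inner product \eqref{inner-product} yields
\begin{equation*}
\|\mathcal F(W)-\mathcal F(\tilde W)\|_{\mathcal H}^2=\frac1\rho\|f_1(v,p)-f_1(\tilde v,\tilde p)\|_{L^2(\Omega)}^2+\frac1\mu\|f_2(v,p)-f_2(\tilde v,\tilde p)\|_{L^2(\Omega)}^2,
\end{equation*}
because the weights $\rho$ and $\mu$ carried by the $u$- and $q$-components exactly cancel the factors $\frac1\rho$ and $\frac1\mu$, while the first, third and fifth components of $\mathcal F$ vanish. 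Everything thus reduces to controlling $\|f_i(v,p)-f_i(\tilde v,\tilde p)\|_{L^2(\Omega)}$.

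Second, I would extract $H^1$ control of the components from the norm of $\mathcal H$. The term $(\alpha_1-\zeta)\|\nabla(v-\tilde v)\|_{L^2}^2$ bounds $\|\nabla(v-\tilde v)\|_{L^2}$, and then $\beta\|\gamma\nabla(v-\tilde v)-\nabla(p-\tilde p)\|_{L^2}^2$ together with the triangle inequality bounds $\|\nabla(p-\tilde p)\|_{L^2}$; since $v=p=0$ on $\Gamma_0$, Poincar\'e's inequality gives $\|v-\tilde v\|_{H^1},\|p-\tilde p\|_{H^1}\le C\|W-\tilde W\|_{\mathcal H}$ for a constant $C$ depending only on the coefficients and $\Omega$. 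This is precisely the coercivity already used to show that $(\mathcal H,\langle\cdot,\cdot\rangle_{\mathcal H})$ is a Hilbert space, so it needs no new argument.

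Third, I would invoke (H1). On the bounded set $\{\|W\|_{\mathcal H}\le R\}$ the pointwise sizes of $v,p$ are controlled by Sobolev embedding; provided $H^1_{\Gamma_0}(\Omega)\hookrightarrow L^\infty(\Omega)$ (e.g.\ low space dimension) the local Lipschitz constant of $f_i$ can be taken uniform over this set, say $L_R$, and then
\begin{equation*}
\|f_i(v,p)-f_i(\tilde v,\tilde p)\|_{L^2(\Omega)}^2\le 2L_R^2\big(\|v-\tilde v\|_{L^2(\Omega)}^2+\|p-\tilde p\|_{L^2(\Omega)}^2\big)\le CL_R^2\|W-\tilde W\|_{\mathcal H}^2.
\end{equation*}
Combining the three steps gives $\|\mathcal F(W)-\mathcal F(\tilde W)\|_{\mathcal H}\le \widetilde L_R\|W-\tilde W\|_{\mathcal H}$, which is the asserted local Lipschitz continuity.

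The main obstacle is exactly the passage from ``$f_i$ locally Lipschitz as functions on $\mathbb R^2$'' to a \emph{uniform pointwise} Lipschitz constant in a genuinely multi-dimensional $\Omega$, where $H^1(\Omega)\not\hookrightarrow L^\infty(\Omega)$ for $n\ge2$ and so the pointwise values of $v,p$ are not uniformly bounded. To close the estimate in that regime I would lean on the growth hypothesis (H2): writing $f_i(v,p)-f_i(\tilde v,\tilde p)$ in intermediate-point form, bounding the Lipschitz factor by $(|v|^{r}+|p|^{r}+|\tilde v|^{r}+|\tilde p|^{r}+1)$, and applying H\"older's inequality together with the Sobolev embedding $H^1_{\Gamma_0}(\Omega)\hookrightarrow L^{2^*}(\Omega)$. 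This forces a restriction on the growth exponent $r$ in terms of the dimension $n$ so that the relevant products are integrable, after which the bound $R$ absorbs the constants into $L_R$. The cleaner alternative, and the one that makes (H1) alone sufficient, is to restrict attention to dimensions for which the $L^\infty$ embedding already holds.
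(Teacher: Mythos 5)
The paper contains no proof of Lemma \ref{lip} at all: the lemma is stated bare, preceded only by the remark that it is ``inspired by'' \cite{freitas} (and its label even carries a leftover editorial query about whether to claim local or global Lipschitz continuity), and later, in Theorem \ref{th-stability}, the needed estimate \eqref{loc-lip} is simply re-imposed as a hypothesis. So there is no argument of the paper to compare yours against; your proposal supplies the missing proof, and its first two steps are exactly right. The reduction
$\|\mathcal F(W)-\mathcal F(\widetilde W)\|_{\mathcal H}^2=\frac1\rho\|f_1(v,p)-f_1(\tilde v,\tilde p)\|_{L^2(\Omega)}^2+\frac1\mu\|f_2(v,p)-f_2(\tilde v,\tilde p)\|_{L^2(\Omega)}^2$
is the same computation the paper performs for $\|\mathcal F(X)\|_{\mathcal H}$ inside the proof of Theorem \ref{th-sol}, and your coercivity step (using $\alpha_1-\zeta>0$ from (A1), the triangle inequality on $\gamma\nabla v-\nabla p$, and Poincar\'e on $H^1_{\Gamma_0}(\Omega)$) correctly converts the $\mathcal H$-norm into $H^1$-control of $v-\tilde v$ and $p-\tilde p$.

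The real content of your write-up is the obstruction you name in the third step, and you are right about it: hypothesis (H1) --- pointwise local Lipschitz continuity of $f_i$ on $\mathbb{R}^2$ --- does not by itself make the Nemytskii maps $H^1_{\Gamma_0}(\Omega)\to L^2(\Omega)$ locally Lipschitz when $n\ge2$, since $H^1(\Omega)\not\hookrightarrow L^\infty(\Omega)$ and the pointwise Lipschitz constant cannot be taken uniform on an $\mathcal H$-ball. Your $L^\infty$-embedding route genuinely covers only $n=1$, which sits awkwardly with the paper's advertised multi-dimensionality, so the Sobolev/H\"older repair is the one that matters. Two refinements there: first, (H2) as stated bounds $|f_i|$ itself, not increments, so the intermediate-point estimate you invoke actually requires a growth bound on the Lipschitz factor (equivalently on $\nabla f_i$), namely $|f_i(v,p)-f_i(\tilde v,\tilde p)|\le C\bigl(1+|v|^r+|p|^r+|\tilde v|^r+|\tilde p|^r\bigr)\bigl(|v-\tilde v|+|p-\tilde p|\bigr)$, which is a strengthening of (H1)--(H2) (and is precisely what \cite{freitas} assumes); second, the exponent restriction this forces is $r\le 2/(n-2)$ for $n\ge3$ (any finite $r$ for $n=2$), via $\||v|^r\,|v-\tilde v|\|_{L^2}\le\|v\|^r_{L^{rn}}\|v-\tilde v\|_{L^{2n/(n-2)}}$ and $H^1_{\Gamma_0}(\Omega)\hookrightarrow L^{2n/(n-2)}(\Omega)$. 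With that hypothesis made explicit, your argument closes; the defect you uncovered is in the paper's formulation of the lemma, not in your proof.
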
}

Thanks to Theorem \ref{th3.1}, we know that $\mathcal{A}$ generates a $C_0$ semigroup $e^{\mathcal{A}t}$ on $\mathbb{X}$, and  the problem (\ref{eq2.8}) can be expressed in the form of an equivalent functional integral equation
\begin{equation}\label{sol}
X(t)=
   e^{\mathcal{A}t}X_0+\int_{0}^{t}e^{\mathcal{A}(t-s)}\mathcal{F}(X(s))ds, ~t\geq0. \\
\end{equation}

Define $C(I,\mathcal{H})$ be the space consisting of the $\mathcal{H}$-value continuous functions on the compact set $I\subset \mathbb{R}_+$ and equipped with the classical norm $\|x\|_{\infty}=\max\limits_{t\in I}\|x(t)\|_{\mathcal{H}}$, then $C(I,\mathcal{H})$ is a Banach space.
By means of Banach fixed point theorem, we will show the existence and uniqueness of the solution of the integral equation (\ref{sol}).

{\begin{theorem}\label{th-sol}
Let $X_0$ and $\mathcal{F}$ be defined as in \eqref{eq2.8}. Assume that the {\color{blue}hypotheses (H1) and (H2) hold}. Then for any initial value $X_0\in \mathcal{H}$ {with $\|X_0\|_{\mathcal{H}}$ sufficiently small,} the integral equation \eqref{sol} has a unique global solution on $C(\mathbb{R}_+,\mathcal{H})$.
\end{theorem}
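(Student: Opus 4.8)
The plan is to realize the mild solution as the unique fixed point of the operator
\[
(\Phi X)(t)=e^{\mathcal{A}t}X_0+\int_{0}^{t}e^{\mathcal{A}(t-s)}\mathcal{F}(X(s))\,ds
\]
associated with the integral equation \eqref{sol}, and to apply the Banach contraction principle, first on a short time interval to obtain a local solution and then to propagate it to all of $\mathbb{R}_+$ by combining a continuation argument with an a priori bound exploiting the smallness of $X_0$. The two structural facts I would lean on are already available: by Theorem \ref{th3.1} the semigroup is contractive, so $\|e^{\mathcal{A}t}\|_{\mathcal{L}(\mathcal{H})}\le 1$ for every $t\ge 0$, and by Lemma \ref{lip} the map $\mathcal{F}$ is locally Lipschitz. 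Note moreover that the growth condition \eqref{li-gro} forces $\mathcal{F}(0)=0$, so on any ball $B_R=\{X:\|X\|_{\mathcal{H}}\le R\}$ one has $\|\mathcal{F}(X)\|_{\mathcal{H}}\le L_R\|X\|_{\mathcal{H}}\le L_R R$, where $L_R$ is the local Lipschitz constant of $\mathcal{F}$ on $B_R$.

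For the local step I would fix $R>\|X_0\|_{\mathcal{H}}$ and work in the closed ball of radius $R$ in $C([0,T],\mathcal{H})$. Using the contraction estimate for the semigroup,
\[
\|(\Phi X)(t)\|_{\mathcal{H}}\le \|X_0\|_{\mathcal{H}}+\int_{0}^{t}\|\mathcal{F}(X(s))\|_{\mathcal{H}}\,ds\le \|X_0\|_{\mathcal{H}}+TL_RR,
\]
together with the analogous difference estimate $\|\Phi X-\Phi Y\|_{\infty}\le TL_R\|X-Y\|_{\infty}$, I would choose $T$ so small that $\|X_0\|_{\mathcal{H}}+TL_RR\le R$ and $TL_R<1$. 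Then $\Phi$ maps the ball into itself and is a strict contraction, hence has a unique fixed point, i.e. a unique local mild solution on $[0,T]$. This part is routine and uses only the contraction property and the local Lipschitz continuity.

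The substantial part is the passage to a global solution. By the standard continuation principle the local solution extends to a maximal interval $[0,T_{\max})$, and it suffices to exclude $\lim_{t\uparrow T_{\max}}\|X(t)\|_{\mathcal{H}}=+\infty$ when $T_{\max}<\infty$. For this I would test the equation against $X$ (rigorously on the dense set of strong solutions and then pass to the limit) to obtain the energy identity $\tfrac12\tfrac{d}{dt}\|X\|_{\mathcal{H}}^2=\Re\langle\mathcal{A}X,X\rangle_{\mathcal{H}}+\Re\langle\mathcal{F}(X),X\rangle_{\mathcal{H}}$, discard the dissipative term $\Re\langle\mathcal{A}X,X\rangle_{\mathcal{H}}\le 0$ from \eqref{E}, and estimate the nonlinear contribution $\Re\langle\mathcal{F}(X),X\rangle_{\mathcal{H}}$ through \eqref{li-gro} together with the Sobolev embeddings $H^1_{\Gamma_0}(\Omega)\hookrightarrow L^{q}(\Omega)$, with $q$ admissible for the given dimension $n$ and growth exponent $r$. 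The crux, and the step I expect to be most delicate, is to close a bootstrap on $\|X(t)\|_{\mathcal{H}}$ that keeps the solution inside the regime where the superlinear nonlinearity is dominated: mere dissipativity allows $\|X(t)\|_{\mathcal{H}}$ to grow, so to prevent the growth condition from triggering finite-time blow-up one must use the smallness of $\|X_0\|_{\mathcal{H}}$ (and, ultimately, the strict decay of the linear dynamics) so that the linear contribution controls the nonlinear energy injection, keeping $\|X(t)\|_{\mathcal{H}}$ bounded up to $T_{\max}$. This contradicts blow-up, forces $T_{\max}=+\infty$, and yields the unique global solution in $C(\mathbb{R}_+,\mathcal{H})$. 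The main obstacle is precisely this uniform-in-time control of $\Re\langle\mathcal{F}(X),X\rangle_{\mathcal{H}}$ via the multidimensional Sobolev calculus while respecting the admissible range of $r$, and it is the smallness assumption together with the dissipativity encoded in \eqref{E} that make the bootstrap close.
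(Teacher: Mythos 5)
Your local-existence step coincides with the paper's: a Banach fixed point argument for the Duhamel operator on a closed ball of $C([0,T],\mathcal{H})$, using the local Lipschitz continuity of $\mathcal{F}$ (Lemma \ref{lip}); that part is sound (your use of $\|e^{\mathcal{A}t}\|\le 1$ in place of the paper's bound $Me^{\omega t}$ is harmless). The gap is in your global step. After discarding the dissipative term you are left with
\begin{equation*}
\frac{d}{dt}\|X(t)\|_{\mathcal{H}}^2\;\le\; 2\,\Re\langle\mathcal{F}(X),X\rangle_{\mathcal{H}}\;\le\; C\|X\|_{\mathcal{H}}^2+C\|X\|_{\mathcal{H}}^{2+\sigma},\qquad \sigma>0,
\end{equation*}
because the growth condition \eqref{li-gro} contains a genuinely linear part. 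Smallness of $\|X_0\|_{\mathcal{H}}$ cannot close a bootstrap against this inequality: the linear term alone lets the bound grow like $e^{Ct}$, so after a time of order $\log(1/\|X_0\|_{\mathcal{H}})$ the solution is no longer confined to a small ball, and from then on the superlinear term permits the comparison function to blow up in finite time. Mere dissipativity, $\Re\langle\mathcal{A}X,X\rangle_{\mathcal{H}}\le 0$, cannot absorb the term $C\|X\|_{\mathcal{H}}^2$; what your bootstrap actually requires is a strict negative contribution $-c\|X\|_{\mathcal{H}}^2$, i.e. exponential decay of the linear dynamics. But that decay is Theorem \ref{t-4-1}, proved later by frequency-domain methods, and it is \emph{not} extractable from the energy identity: \eqref{E} shows the dissipation controls only the memory component $\|\eta\|_{\Xi}$, not the full norm, so no such term exists in your differential inequality. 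Even granting Theorem \ref{t-4-1}, semigroup decay cannot be inserted pointwise into the energy identity; one must return to the Duhamel formula, which is precisely the content of the later stability result (Theorem \ref{th-stability}), not of the present well-posedness theorem.

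The paper's own global step avoids all of this and is more elementary. From (H2) and Poincar\'e's inequality it derives a \emph{global linear-growth} bound $\|\mathcal{F}(X)\|_{\mathcal{H}}^2\le\vartheta\|X\|_{\mathcal{H}}^2$ (the superlinear factors being absorbed into the constant $l_0$), inserts this into the integral equation \eqref{sol}, and applies Gronwall's inequality there, obtaining $\|X(t)\|_{\mathcal{H}}\le C(T)\|X_0\|_{\mathcal{H}}$ on every finite interval $[0,T]$. This bound is allowed to grow exponentially in $T$; global existence needs only finiteness on each $[0,T]$, not a uniform-in-time bound, so no bootstrap, no smallness mechanism, and no decay of $e^{\mathcal{A}t}$ enter the argument. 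To repair your proof you should either establish such a linear-growth estimate for $\mathcal{F}$ and run Gronwall on \eqref{sol} as the paper does, or explicitly defer the uniform-in-time control to the stability section where \eqref{e-s} is available.
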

\begin{proof}
It can be easily verified that $X(\cdot)$ is a $\mathcal{H}$-value continuous function with respect to variable $t$. For $a>0$ fixed, we define an operator $\mathcal{J}$ on $C ([0,a],\mathcal{H})$ by
\[
\mathcal{J}X(t)=e^{\mathcal{A}t}X_0+\int_{0}^{t}e^{\mathcal{A}(t-s)}\mathcal{F}(X(s))ds,~~\forall t\in[0,a].
\]
Note that the solvability of (\ref{sol}) is equivalent to the fixed point of the operator equation $\mathcal{J}X = X$.

Since $e^{\mathcal{A}t}$ is a $C_0$-semigroup, there exist $M>0$ and $\omega>0$ such that $\|e^{\mathcal{A}t}\|\leq M e^{wt},~t\geq \mathbb{R}_+$.
In view of the {\color{blue}hypothesis} (H1), we know that Lemma \ref{lip} holds. Denote $B_K=\{\varphi \in C([0,a],\mathcal{H})| \|\varphi\|_{\mathcal{H}}\leq K\}$. For any given $X,\widetilde{X}\in B_K$, it follows from Lemma \ref{lip} that
$$
\|\mathcal{F}(X)-\mathcal{F}(\tilde{X})\|_{\mathcal{H}}\leq L(K)\|X-\tilde{X}\|_{\mathcal{H}},
$$
where $L(\cdot)$ is the Lipschitz coefficient. Then $\mathcal{F}$ is {\color{blue}uniformly bound} on $B_K$, and hence we denote $N=\sup\limits_{X\in B_K}\|\mathcal{F}(X)\|_{\mathcal{H}}$. {Choose a sufficiently small constant $b>0$ such that $b\leq a$, combining with the assumption for the small initial value, it deduces that $(M L (K))^2 M_bb<1$ and $Me^{\omega b}(\|X_0\|_{\mathcal{H}}+Nb/\omega)\leq K$, where $M_b=\frac{1}{2\omega}(e^{2\omega b}-1)$.} Let $D=C([0,b],B_K)$, which is a closed sphere of radius $K$ in $C ([0,b],\mathcal{H})$. It is easy to check that $\mathcal{J}:D\rightarrow D$. For $X,Y\in D$, making use of H\"{o}lder inequality, we have
\begin{eqnarray*}
\|\mathcal{J}X(s)-\mathcal{J}Y(s)\|_{\mathcal{H}}^2&\leq&\big(\int_{0}^{t}\big\|e^{\mathcal{A}(t-s)}[\mathcal{F}(X(s))-\mathcal{F}(Y(s))]\big\|_{\mathcal{H}}ds\big)^2\\
&\leq & M^2\big(\int_{0}^{t}e^{\omega(t-s)}\|\mathcal{F}(X(s))-\mathcal{F}(Y(s))\|_{\mathcal{H}}ds\big)^2\\
&\leq&M^2M_b (L(K))^2\int_{0}^{t}\|X(s)-Y(s)\|_{\mathcal{H}}^2ds\\
&\leq&{(M L (K))^2 M_bb\|X-Y\|_{\infty}^2,}
\end{eqnarray*}
which shows that $\mathcal{J}$ is a contraction operator. By Banach fixed point theorem, there is a unique solution $X(t)\in C([0,b],\mathcal{H})$. This proves the local existence and uniqueness of the solution to system \eqref{eq2.8}.

Next, we
claim the global existence of the solution. {Similar to} \cite{sn1}, in order to prove the global
existence, we {\color{blue}shall} derive a priori estimate of this solution. Let $X(t)$ be a solution of \eqref{sol} on the interval $[0,T]$, $T>0$, we first estimate the nonlinear term. {\color{blue}Using} the hypothesis {\color{blue}(H2)} and Poincar\'{e} inequality, it follows that
\begin{eqnarray}\nonumber
\|\mathcal{F}(X)\|_{\mathcal{H}}^2&=&\frac{1}{\rho}\int_{\Omega}|f_1(v,p)|^2dx+\frac{1}{\mu}\int_{\Omega}|f_2(v,p)|^2dx\cr
&\leq&(\frac{1}{\rho}+\frac{1}{\mu})c^2\int_{\Omega}(|v|+|p|)^2(|v|^r+|p|^r+1)^2 dx\cr
&\leq&2(\frac{1}{\rho}+\frac{1}{\mu})c^2l_0\int_{\Omega}(|\nabla v|^2+|\nabla p|^2) dx\cr
&\leq&2(\frac{1}{\rho}+\frac{1}{\mu})c^2l_0\delta\int_{\Omega}(\alpha_1-\zeta)|\nabla v|^2+\beta|\gamma\nabla v-\nabla p|^2dx\cr
&\leq&\vartheta\|X\|_{\mathcal{H}}^2,
\end{eqnarray}
where $\delta=\max\{\frac{\gamma^2+1}{\alpha_1-\zeta},\frac{1}{\beta}\}$ and $\vartheta=2(\frac{1}{\rho}+\frac{1}{\mu})c^2l_0\delta$, {in which $c$ is the same as in (\ref{li-gro}) and $l_0>0$ is related to $r$ and Poincar\'{e} constant.} Thus
\begin{eqnarray}\label{e-a1}
\|X(t)\|_{\mathcal{H}}^2\leq M^2e^{2\omega T}\|X_0\|_{\mathcal{H}}^2+M^2 M_T\vartheta \int_{0}^{t}\|X(s)\|_{\mathcal{H}}^2ds,
\end{eqnarray}
where $M_T=\frac{1}{2\omega}(e^{2\omega T}-1)$.
Applying Gronwall's inequality to \eqref{e-a1}, it follows that
\begin{eqnarray}\nonumber
{\|X(t)\|_{\mathcal{H}}\leq e^{\frac{(M\sqrt{M_T\vartheta})^2 T}{2}}Me^{\omega T}\|X_0\|_{\mathcal{H}},~t\in[0,T].}
\end{eqnarray}
Since $T>0$ is arbitrary chosen and can be large enough, the global existence of the solution is verified.
\end{proof}
}

%

\section{Stability analysis}
\qquad
{This section is devoted to discussing the long time behavior of the solution to the  nonlinear  system (\ref{sys1}). To do this, let us first consider the stability of  the $C_0$- semigroup  associated to the linear part of this system.
\subsection{Exponential stability of $C_0$-semigroup $e^{\mathcal{A}t}$}
\qquad
 Based on the  frequency domain method, we obtain the following result.
\begin{theorem}\label{t-4-1}
	Assume that the assumptions (A1)-(A2) hold. Then the semigroup $e^{\mathcal{A}t}$ associated with the linear part of  system (\ref{sys2}) is exponentially stable on $\mathcal{H}$, that is, there are two constants $\widetilde{M}>0$ and {\color{blue}$\widetilde{r}>0$} such that the semigroup  $e^{\mathcal{A}t}$ satisfies
	\begin{equation}\label{2.5}
	\|e^{\mathcal{A}t}(v_{0},v_{1},p_{0}, p_{1}, \eta_0)\|_{\mathcal{H}}\leq{\widetilde{M}}{\color{blue}e^{-\widetilde{r}t}}\|(v_{0},v_{1},p_{0}, p_{1}, \eta_0)\|_{\mathcal{H}}.
	\end{equation}
\end{theorem}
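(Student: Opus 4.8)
The plan is to apply the Gearhart--Pr\"{u}ss--Huang frequency domain characterization of exponential stability. Since Theorem \ref{th3.1} already gives that $e^{\mathcal{A}t}$ is a $C_0$-semigroup of contractions on the Hilbert space $\mathcal{H}$, it is exponentially stable if and only if (i) $i\mathbb{R}\subset\varrho(\mathcal{A})$ and (ii) $\sup_{\beta\in\mathbb{R}}\|(i\beta I-\mathcal{A})^{-1}\|_{\mathcal{L}(\mathcal{H})}<\infty$. Both conditions can be established simultaneously by a single contradiction argument, so I would negate them together.

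Suppose the conclusion fails. Then there exist a sequence $\beta_n\in\mathbb{R}$ and vectors $W_n=(v_n,u_n,p_n,q_n,\eta_n)^T\in\mathcal{D}(\mathcal{A})$ with $\|W_n\|_{\mathcal{H}}=1$ such that $F_n:=(i\beta_n I-\mathcal{A})W_n\to 0$ in $\mathcal{H}$; write its components as $F_n=(f^1_n,\dots,f^5_n)^T$. Expanding $(i\beta_n I-\mathcal{A})W_n=F_n$ componentwise gives five equations, the key ones being $i\beta_n v_n-u_n=f^1_n$, $i\beta_n p_n-q_n=f^3_n$, and $i\beta_n\eta_n-u_n+\eta_{n,s}=f^5_n$, together with the two second-order relations from the elasticity and charge equations. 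Taking the real part of $\langle F_n,W_n\rangle_{\mathcal{H}}$ and using the dissipation identity \eqref{E} together with assumption (A2) (which gives $-g'\geq k_1 g$), I obtain $\frac{k_1}{2}\|\eta_n\|_{\Xi}^2\leq -\frac12\int_0^\infty\!\int_\Omega g'(s)|\nabla\eta_n|^2\,dx\,ds=\Re\langle F_n,W_n\rangle_{\mathcal{H}}\leq\|F_n\|_{\mathcal{H}}\to 0$, so the memory component is controlled: $\|\eta_n\|_\Xi\to 0$.

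The core of the proof, and its main obstacle, is then to propagate this single piece of dissipation through the coupling to force all remaining components to vanish. First I would recover the velocity $u_n$ from the memory variable: solving the first-order ODE $\eta_{n,s}+i\beta_n\eta_n=u_n+f^5_n$ in $s$ with $\eta_n(x,0)=0$ (from \eqref{eta2}) expresses $\eta_n$ in terms of $u_n$, and testing this relation against $g(s)$ and invoking (A1)--(A2) lets me bound $\int_\Omega\rho|u_n|^2\,dx$ by $\|\eta_n\|_\Xi$ plus $o(1)$, giving $u_n\to 0$ in $L^2(\Omega)$. Next, using $u_n=i\beta_n v_n-f^1_n$ and testing the elasticity equation with suitable multipliers (essentially $v_n$ and $p_n$), the coercivity of $\int_\Omega(\alpha_1-\zeta)|\nabla v|^2+\beta|\gamma\nabla v-\nabla p|^2\,dx$ guaranteed by $\alpha_1=\alpha-\gamma^2\beta>0$ yields $\nabla v_n\to 0$. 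The delicate part is the charge equation, which carries no direct damping: here I would exploit the strong coupling terms $-\gamma\beta\Delta v$ and $-\gamma\beta\Delta p$ by constructing frequency multipliers that transfer the decay already obtained for $(v_n,u_n)$ into decay of $q_n$ in $L^2(\Omega)$ and of $\gamma\nabla v_n-\nabla p_n$ in $L^2(\Omega)$. Summing the five contributions then forces $\|W_n\|_{\mathcal{H}}\to 0$, contradicting $\|W_n\|_{\mathcal{H}}=1$, so both (i) and (ii) hold.

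I expect the hardest step to be the transfer of decay to the undamped charge equation uniformly in $\beta_n$: the high-frequency regime $\beta_n\to\infty$ is where powers of $\beta_n$ threaten to blow up the multiplier estimates, and the entire difficulty of the \emph{indirect} stabilization lies in selecting multipliers that cancel these powers. A secondary technical point, specific to the infinite (rather than single) memory, is the recovery of $u_n$ from $\eta_n$ through the $s$-integral, which relies essentially on the kernel bounds in (A2). Notably, because the coupling acts through $\Delta$ in both equations with the coercive form above, I do not expect to need any relation between the wave speeds $\rho/\alpha$ and $\mu/\beta$, in contrast with the Timoshenko case.
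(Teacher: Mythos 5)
Your skeleton — Gearhart--Pr\"{u}ss--Huang, contradiction sequences $\|W_n\|_{\mathcal H}=1$ with $(i\beta_n-\mathcal A)W_n\to0$, dissipation controlling $\|\eta_n\|_\Xi$ first, then $u_n$, then $\nabla v_n$, then $q_n$ and $\gamma\nabla v_n-\nabla p_n$ — is exactly the paper's, and merging conditions (i) and (ii) into a single contradiction argument is harmless (imaginary spectral points of a contraction generator are approximate eigenvalues, and none of the subsequent estimates requires $|\beta_n|\to\infty$). The genuine gap is the step you dismiss as ``secondary'': recovering $u_n\to0$ in $L^2(\Omega)$ from the memory equation. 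Solving the ODE in $s$ and testing against $g$ gives
\[
\frac{\zeta-\hat g(i\beta_n)}{i\beta_n}\,u_n(x)=\int_0^\infty g(s)\eta_n(x,s)\,ds-\int_0^\infty g(s)\!\int_0^s e^{-i\beta_n(s-r)}f_n^5(x,r)\,dr\,ds,
\qquad
\hat g(i\beta_n):=\int_0^\infty g(s)e^{-i\beta_n s}\,ds,
\]
and since $\hat g(i\beta_n)\to0$ as $|\beta_n|\to\infty$ by Riemann--Lebesgue, inverting the prefactor costs a factor $|\beta_n|$: the best available bound is $\|u_n\|_{L^2(\Omega)}\lesssim|\beta_n|\bigl(\|\eta_n\|_\Xi+o(1)\bigr)$. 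The dissipation identity gives $\|\eta_n\|_\Xi\to0$ with \emph{no rate}, so this estimate is vacuous precisely in the high-frequency regime $\beta_n\to\infty$; no kernel bound from (A2) can repair it, because the transfer function from $u_n$ to $\eta_n$ genuinely decays like $1/|\beta_n|$ — the memory equation alone cannot control $u_n$ uniformly in frequency. As written, your argument collapses at its first nontrivial step.

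The idea you are missing (the paper's Step 2) is that the dangerous factor $\lambda_n$ must be removed using the \emph{elasticity} equation, not the memory equation. Pair the memory equation with the multiplier $g(s)u_n(x)$, so the problematic term is $\int_0^\infty\!\!\int_\Omega i\lambda_n g(s)\overline{\eta_n}u_n\,dx\,ds$; then substitute $i\lambda_n u_n=\xi_n^2+\frac1\rho\bigl[(\alpha-\zeta)\Delta v_n-\gamma\beta\Delta p_n+\int_0^\infty g(r)\Delta\eta_n(\cdot,r)\,dr\bigr]$ from \eqref{e2} and integrate by parts. Every resulting term is a pairing of $g(s)\nabla\eta_n$ against a gradient bounded by $\|X_n\|_{\mathcal H}=1$, hence is at most $\zeta^{1/2}\|\eta_n\|_\Xi\cdot O(1)\to0$, with no stray power of $\lambda_n$. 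The term $\int\!\!\int g\,\eta_{n,s}\overline{u_n}$ is handled by integration by parts in $s$ (using $\eta_n(\cdot,0)=0$) together with (A2), and one is left with $\zeta\|u_n\|_{L^2(\Omega)}^2\to0$. Once $u_n\to0$ is secured this way, your remaining steps are essentially the paper's Steps 3--5 (add the two equations to cancel $\gamma\beta(\gamma\Delta v_n-\Delta p_n)$, then test with $v_n$ and with $p_n$) and do go through. Note finally that you have mislocated the main difficulty: the transfer of decay to the charge equation is comparatively routine, whereas the $u_n$ step is the heart of the indirect-damping argument.
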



In order to show the above theorem, let us recall the following frequency characteristics on the exponential stability  of $C_0$-semigroups of contractions on Hilbert spaces {
(see \cite{Gear, huangfalun, pruss}).}
\begin{lemma}\label{l-4-1}
Let $\mathcal{A}$ be the infinitesimal generator of  a bounded $C_0$ semigroup $e^{\mathcal{A}t}$ on a Hilbert space $\mathcal{H}$ and {satisfy} $i\mathbb{R}\subset\varrho(\mathcal{A})$. Then $e^{\mathcal{A}t}$ is exponentially stable if and only if the following condition holds:
\begin{equation}\label{403}
 \sup_\lambda\limits\{\|(i\lambda-\mathcal{A})^{-1}\|_{\mathcal{H}}\mid\lambda\in \mathbb{R}\}<\infty.
 \end{equation}
\end{lemma}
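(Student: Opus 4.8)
The statement is the classical Gearhart--Pr\"uss--Huang frequency-domain characterization, and the plan is to prove the two implications separately; the Hilbert-space structure enters decisively only in the sufficiency direction, through Plancherel's theorem.

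\textbf{Necessity.} This direction is routine. If $\|e^{\mathcal{A}t}\|\le \widetilde{M}\,e^{-\widetilde{r}t}$ for some $\widetilde{r}>0$, then the half-plane $\{\mathrm{Re}\,s>-\widetilde{r}\}$ lies in $\varrho(\mathcal{A})$ and the resolvent is the Laplace transform of the semigroup, $(i\lambda-\mathcal{A})^{-1}=\int_{0}^{\infty}e^{-i\lambda t}e^{\mathcal{A}t}\,dt$ for every $\lambda\in\mathbb{R}$. Estimating under the integral sign gives $\|(i\lambda-\mathcal{A})^{-1}\|\le\int_{0}^{\infty}\widetilde{M}e^{-\widetilde{r}t}\,dt=\widetilde{M}/\widetilde{r}$, a bound independent of $\lambda$, which is exactly \eqref{403}.

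\textbf{Sufficiency.} Write $M:=\sup_{\lambda\in\mathbb{R}}\|(i\lambda-\mathcal{A})^{-1}\|$ and $M_0:=\sup_{t\ge0}\|e^{\mathcal{A}t}\|$. First I would promote the bound on $i\mathbb{R}$ to a bound on a whole vertical strip containing the imaginary axis. For $\mathrm{Re}\,s>0$ the resolvent already exists with $\|(s-\mathcal{A})^{-1}\|\le M_0/\mathrm{Re}\,s$ because the semigroup is bounded; for $s=\sigma+i\lambda$ with $|\sigma|$ small, the identity
\[
(s-\mathcal{A})^{-1}=(i\lambda-\mathcal{A})^{-1}\bigl[I+\sigma(i\lambda-\mathcal{A})^{-1}\bigr]^{-1},
\]
justified by a Neumann series whenever $|\sigma|M<1$, places the strip $|\mathrm{Re}\,s|\le 1/(2M)$ in $\varrho(\mathcal{A})$ with $\|(s-\mathcal{A})^{-1}\|\le 2M$. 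Combining the two estimates yields a constant $C_0$ with $\|(s-\mathcal{A})^{-1}\|\le C_0$ throughout $\{\mathrm{Re}\,s>-\varepsilon_0\}$ for some $\varepsilon_0>0$. Now comes the heart of the argument, where the Hilbert-space hypothesis is indispensable: for $\sigma>0$ and $x\in\mathcal{H}$ the damped orbit $e^{-\sigma t}e^{\mathcal{A}t}x$ lies in $L^2(\mathbb{R}_+,\mathcal{H})$ with Fourier--Laplace transform $(\sigma+i\lambda-\mathcal{A})^{-1}x$, so Plancherel's theorem gives
\[
\int_{-\infty}^{\infty}\bigl\|(\sigma+i\lambda-\mathcal{A})^{-1}x\bigr\|^2\,d\lambda=2\pi\int_{0}^{\infty}e^{-2\sigma t}\bigl\|e^{\mathcal{A}t}x\bigr\|^2\,dt .
\]
To bound the left side uniformly in $\sigma\in(0,\varepsilon_0)$ I would split the $\lambda$-integral: on $|\lambda|\le1$ use the strip bound $\|(\sigma+i\lambda-\mathcal{A})^{-1}x\|\le C_0\|x\|$, while on $|\lambda|>1$ use, for $x\in\mathcal{D}(\mathcal{A})$, the decomposition $(s-\mathcal{A})^{-1}x=s^{-1}x+s^{-1}(s-\mathcal{A})^{-1}\mathcal{A}x$ to extract the integrable decay $\|(s-\mathcal{A})^{-1}x\|\le(\|x\|+C_0\|\mathcal{A}x\|)/|\lambda|$. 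This produces a $\sigma$-independent bound of the form $C_1(\|x\|^2+\|\mathcal{A}x\|^2)$; letting $\sigma\downarrow0$ by monotone convergence shows $\int_{0}^{\infty}\|e^{\mathcal{A}t}x\|^2\,dt<\infty$ for every $x\in\mathcal{D}(\mathcal{A})$.

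I expect the main obstacle to be the final passage from square-integrability of orbits to exponential decay, and specifically the removal of the graph-norm dependence: the Plancherel estimate controls $\int_{0}^{\infty}\|e^{\mathcal{A}t}x\|^2dt$ only by $\|x\|^2+\|\mathcal{A}x\|^2$ on the core $\mathcal{D}(\mathcal{A})$, because the decay needed to make the tail integral converge is supplied by the regularity term $\mathcal{A}x$ and cannot be read off from $\|x\|$ alone. To upgrade this to the uniform bound $\int_{0}^{\infty}\|e^{\mathcal{A}t}x\|^2dt\le C\|x\|^2$ valid for all $x\in\mathcal{H}$, I would run the identical Plancherel argument for the adjoint semigroup $e^{\mathcal{A}^{*}t}$, whose generator inherits the uniform resolvent bound on $i\mathbb{R}$ since $\|(i\lambda-\mathcal{A}^{*})^{-1}\|=\|(i\lambda-\mathcal{A})^{-1}\|$, and then dualize. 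Once the uniform bound holds, exponential stability follows by an elementary Datko-type argument: the boundedness inequality $t\|e^{\mathcal{A}t}x\|^2\le M_0^2\int_{0}^{t}\|e^{\mathcal{A}s}x\|^2ds\le M_0^2 C\|x\|^2$ yields $\|e^{\mathcal{A}t}\|\le M_0\sqrt{C/t}$, so $\|e^{\mathcal{A}t_0}\|<1$ for some $t_0>0$, and the semigroup property together with boundedness then forces $\|e^{\mathcal{A}t}\|\le \widetilde{M}e^{-\widetilde{r}t}$ with $\widetilde{r}>0$, completing the proof.
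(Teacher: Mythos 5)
The paper offers no proof of this lemma at all --- it is quoted as a known result with citations to Gearhart, Huang and Pr\"uss --- so your attempt must stand on its own. Most of it does: the necessity direction via the Laplace-transform representation is correct, the Neumann-series extension of the resolvent bound to a vertical strip is correct, and the Plancherel identity with the splitting $|\lambda|\le 1$ versus $|\lambda|>1$ correctly yields $\int_0^\infty\|e^{\mathcal{A}t}x\|^2\,dt\le C\bigl(\|x\|^2+\|\mathcal{A}x\|^2\bigr)$ for $x\in\mathcal{D}(\mathcal{A})$. The genuine gap is precisely the step you flag as the main obstacle: ``run the identical argument for $e^{\mathcal{A}^*t}$ and dualize'' does not deliver the uniform bound $\int_0^\infty\|e^{\mathcal{A}t}x\|^2\,dt\le C\|x\|^2$. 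The only natural pairing available from your two time-side estimates is
\begin{equation*}
t\,\langle e^{\mathcal{A}t}x,y\rangle=\int_0^t\langle e^{\mathcal{A}s}x,\;e^{\mathcal{A}^*(t-s)}y\rangle\,ds,
\end{equation*}
and Cauchy--Schwarz then gives only $t\,|\langle e^{\mathcal{A}t}x,y\rangle|\le C(\|x\|+\|\mathcal{A}x\|)(\|y\|+\|\mathcal{A}^*y\|)$: the graph norms survive on \emph{both} arguments, so you have bounded $e^{\mathcal{A}t}$ with decay $1/t$ only as an operator from $\mathcal{D}(\mathcal{A})$ into the dual of $\mathcal{D}(\mathcal{A}^*)$, not on $\mathcal{H}$. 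No density argument removes the graph norms (the constant degenerates with them), and bootstrapping through the semigroup law merely accumulates higher ones, $\|e^{\mathcal{A}nt}x\|\le (C/t)^n\|x\|_{\mathcal{D}(\mathcal{A}^n)}$. The classical proof that genuinely uses the adjoint (van Neerven; Arendt--Batty--Hieber--Neubrander) dualizes \emph{frequency-side} bounds $\int_{\mathbb{R}}\|(i\lambda-\mathcal{A})^{-1}x\|^2d\lambda\le C\|x\|^2$ and $\int_{\mathbb{R}}\|((i\lambda-\mathcal{A})^{-1})^*y\|^2d\lambda\le C\|y\|^2$ against the second-order inversion formula $t\langle e^{\mathcal{A}t}x,y\rangle=\tfrac{1}{2\pi}\int_{\mathbb{R}} e^{i\lambda t}\langle (i\lambda-\mathcal{A})^{-2}x,y\rangle\,d\lambda$ --- and those bounds must first be obtained \emph{free of graph norms}, which is exactly what your $|\lambda|>1$ decomposition fails to do.

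The repair is a resolvent identity linking the imaginary axis to the line $\mathrm{Re}\,s=1$, and it simplifies your proof: it eliminates the adjoint, the graph norm, and even the strip extension. Write $R(s)=(s-\mathcal{A})^{-1}$, $M_0=\sup_{t\ge0}\|e^{\mathcal{A}t}\|$, $M=\sup_{\lambda}\|R(i\lambda)\|$, and recall $\|R(s)\|\le M_0/\mathrm{Re}\,s$ for $\mathrm{Re}\,s>0$. From $R(i\lambda)=[I+R(i\lambda)]R(1+i\lambda)$ you get $\|R(i\lambda)x\|\le(1+M)\|R(1+i\lambda)x\|$, and for $0<\sigma\le1$, from $R(\sigma+i\lambda)=[I-\sigma R(\sigma+i\lambda)]R(i\lambda)$ with $\|\sigma R(\sigma+i\lambda)\|\le M_0$ you get $\|R(\sigma+i\lambda)x\|\le(1+M_0)(1+M)\|R(1+i\lambda)x\|$ for every $x\in\mathcal{H}$. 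Plancherel on the line $\mathrm{Re}\,s=1$ gives
\begin{equation*}
\int_{\mathbb{R}}\|R(1+i\lambda)x\|^2\,d\lambda=2\pi\int_0^\infty e^{-2t}\|e^{\mathcal{A}t}x\|^2\,dt\le \pi M_0^2\|x\|^2,
\end{equation*}
whence $\int_0^\infty e^{-2\sigma t}\|e^{\mathcal{A}t}x\|^2\,dt=\tfrac{1}{2\pi}\int_{\mathbb{R}}\|R(\sigma+i\lambda)x\|^2\,d\lambda\le \tfrac12(1+M_0)^2(1+M)^2M_0^2\|x\|^2$ uniformly in $\sigma\in(0,1]$; letting $\sigma\downarrow0$ by monotone convergence yields $\int_0^\infty\|e^{\mathcal{A}t}x\|^2\,dt\le C\|x\|^2$ for \emph{all} $x\in\mathcal{H}$, after which your closing Datko argument applies verbatim. (A cosmetic point: the correct pointwise identity is $\|(i\lambda-\mathcal{A}^*)^{-1}\|=\|(-i\lambda-\mathcal{A})^{-1}\|$, harmless after taking the supremum over $\lambda\in\mathbb{R}$, though with the above fix the adjoint is not needed at all.)
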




\noindent{\bf Proof of Theorem \ref{t-4-1}.}\,\,
 By Lemma \ref{l-4-1}, it is sufficient to verify that the conditions $i\mathbb{R}\subset\varrho(\mathcal{A})$ and \eqref{403} hold. For clarity, the proof is divided by two parts.
	
	\vspace{0.5cm}
\noindent{\bf Part I.
	We show  (\ref{403})   holds.}

By contradiction,
if it is not true, then thanks to Banach-Steinhaus theorem, there exist  sequences $X_n=(v_n,u_n,p_n,q_n,w_n)\in \mathcal{D}(\mathcal{A})$ with $\|X_n\|_{\mathcal{H}}=1$ and $\lambda_n\rightarrow \infty$ such that
\begin{equation}\label{bs1}
(i\lambda_n-\mathcal{A})X_n\equiv (\xi^1_n,\xi^2_n,z^1_n,z^2_n,\nu_n)\rightarrow 0~\text{in}~\mathcal{H},
\end{equation}
specifically,
\begin{align}
\label{e1} i \lambda_n v_n-u_n= \xi^1_n&\rightarrow 0,~\text{in}~H^1_{\Gamma_0}(\Omega),\\
\label{e2}i \lambda_nu_n-\frac{1}{\rho}[(\alpha-\zeta) \Delta v_{n}-\gamma\beta \Delta p_{n}+\int_{0}^{\infty}g(s)\Delta \eta_n(x,s)ds]=\xi^2_n&\rightarrow0,~\text{in}~L^2(\Omega),\\
\label{e3} i \lambda_n p_n-q_n=z^1_n &\rightarrow0,~\text{in}~H^1_{\Gamma_0}(\Omega),\\
 \label{e4}  i \lambda_n q_n- \frac{1}{\mu}(\beta \Delta p_{n}-\gamma\beta \Delta v_{n})=z^2_n&\rightarrow0,~\text{in}~L^2(\Omega),\\
 \label{e5}     i \lambda_n \eta_n-( u_n -\eta_{n,s})=\nu_n&\rightarrow0,~\text{in}~\Xi
\end{align}
with the boundary conditions
\begin{align}
\label{b1} v_n=p_n=0,~&\text{on}~ \Gamma_0,\\
\label{b2}\alpha \frac{\partial v_n}{\partial \mathbf{n}}-\gamma\beta \frac{\partial p_n}{\partial \mathbf{n}}=\beta \frac{\partial p_n}{\partial \mathbf{n}}-\gamma\beta \frac{\partial v_n}{\partial \mathbf{n}}=0,~&\text{on}~ \Gamma_1.
\end{align}
It is easy to check that $(\ref{b1})-(\ref{b2})$ can be simplified as Dirichlet-Neumann type:
\begin{align}
\label{d-n}
\begin{array}{l}
 v_n|_{\Gamma_0}=p_n|_{\Gamma_0}=\frac{\partial v_n}{\partial \mathbf{n}}|_{\Gamma_1}=\frac{\partial p_n}{\partial \mathbf{n}}|_{\Gamma_1}=0.
\end{array}
\end{align}

Note that $\eta_n,\eta_{n,s}, u_n=0$ on $\Gamma_0$, then \eqref{e5} implies that
\begin{align}
\label{e5-out}     i \lambda_n \eta_n-( u_n -\eta_{n,s})&\rightarrow0,~\text{in}~L_{g}^2(\mathbb{R}_+,L^2(\Omega)),
\end{align}
where the norm in $L_{g}^2(\mathbb{R}_+,L^2(\Omega))$ is defined as the conventional one, that is, $$\|\varphi\|_g= \int_{0}^{\infty}\int_{\Omega}g(s)|\varphi(x,s)|^2 dsdx.$$

In the sequel of this part, we aim to show
\begin{equation}\label{hh14}
|\eta_n\|_{\Xi},\; \|u_n\|_{L^2(\Omega)},\; \|\nabla v_{n}\|_{L^2(\Omega)},\; \|\gamma \nabla v_{n}-\nabla p_{n}\|_{L^2(\Omega)},\; \|q_n\|_{L^2(\Omega)}=o(1).
\end{equation}
As long as  \eqref{hh14} is proved, we can get directly that $\|X_n\|_{\mathcal{H}}=o(1)$ due to (\ref{inner-product}), which
contradicts the fact that $\|X_{n}\|_{\mathcal{H}}= 1$, and thus, the proof of {\bf Part I} can be finished.

 For this aim, we divide it by five steps.

\vspace{0.3cm}
\noindent{\it Step 1. $\|\eta_n\|_\Xi\rightarrow0,\quad n\to \infty.$}

By virtue of the dissipativeness of $\mathcal{A}$ (see  (\ref{E})) and $(\ref{bs1})$, we get
\begin{align}
\nonumber \int_{0}^{\infty}\int_{\Omega}g'(s)|\nabla\eta_{n}|^2dxds&\rightarrow0,~\text{in}~\mathbb{C}.
\end{align}

Due to the assumption (A2), we obtain that
\begin{eqnarray*}
0\leftarrow-\int_{0}^{\infty}\int_{\Omega}g'(s)|\nabla\eta_{n}|^2dxds\geq k_1\int_{0}^{\infty}\int_{\Omega}g(s)|\nabla\eta_{n}|^2dxds\geq 0,
\end{eqnarray*}
which implies that
\begin{align}
\label{w0} \|\eta_n\|_{\Xi}\rightarrow0.
\end{align}

\vspace{0.3cm}
\noindent{\it Step 2. $ \|u_n\|_{L^2(\Omega)}\rightarrow0,\quad n\to\infty.$}

Taking the $L^2-$inner product of $(\ref{e5-out})$ with $g(s)u_n(x)$ and then integrating it with respect to $s$, we have
\begin{align}
\label{e5e-g-u-fir} \int_{0}^{\infty}\int_{\Omega}i \lambda_n g(s)\overline{\eta_n}u_ndxds+ \int_{0}^{\infty}\int_{\Omega}g(s)\overline{\eta_{n,s}}u_ndxds-\int_{0}^{\infty}\int_{\Omega}g(s)|u_n|^2dxds\rightarrow0.
\end{align}
We now estimate each term in (\ref{e5e-g-u-fir}). {Recall $\zeta=\int_{0}^{\infty}g(s)ds>0$.}

{\it Observation I. }
In view of \eqref{e2}, using H\"{o}lder inequality, we have
{\begin{eqnarray}\nonumber
&&\bigg|\int_{0}^{\infty}\int_{\Omega}i \lambda_n g(s)\overline{\eta_n}u_ndxds\bigg|^2\cr
&=&\frac{1}{\rho^2}\bigg|\int_{0}^{\infty}\int_{\Omega}g(s)\overline{\eta_{n}}(x,s)[(\alpha-\zeta)\Delta v_{n}-\gamma\beta \Delta p_{n}+\int_{0}^{\infty}g(r)\Delta \eta_n(x,r)dr]dxds\bigg|^2\cr
&=&\frac{1}{\rho^2}\bigg|-(\alpha-\zeta)\int_{0}^{\infty}\int_{\Omega}g(s)\nabla\overline{\eta_n}\cdot \nabla v_ndxds+\gamma\beta\int_{0}^{\infty}\int_{\Omega}g(s)\nabla\overline{\eta_n}\cdot \nabla p_ndxds\cr
&&-\int_{0}^{\infty}\big(\int_{\Omega}g(s)\nabla \overline{\eta_n}(x,s)\cdot (\int_{0}^{\infty}g(r)\nabla \eta_n(x,r)dr) dx\big)ds\bigg|^2\cr
&\leq&\frac{3}{\rho^2}(\alpha-\zeta)^2\zeta\|\nabla v_n\|_{L^2(\Omega)}^2\int_{0}^{\infty}\int_{\Omega}g(s)|\nabla\eta_n|^2dxds\cr
&&+\frac{3\gamma^2\beta^2}{\rho^2}\zeta\|\nabla p_n\|_{L^2(\Omega)}^2\int_{0}^{\infty}\int_{\Omega}g(s)|\nabla\eta_n|^2dxds\cr
&&+\frac{3\zeta^2}{\rho^2}\big(\int_{0}^{\infty}\int_{\Omega}g(s)|\nabla \eta_n(x,s)|^2dxds\big)^2,
\end{eqnarray}
which along with $\|\eta_n\|_\Xi\rightarrow 0$ yields that the first term in (\ref{e5e-g-u-fir}) satisfies
\begin{align}
\nonumber\int_{0}^{\infty}\int_{\Omega}i \lambda_n g(s)\overline{\eta_n}u_ndxds\rightarrow 0.
\end{align}}

{\it Observation II.} Thanks to (A2) and \eqref{w0},  integrating by part and applying Cauchy-Schwartz inequality induce that
{\begin{eqnarray}\nonumber
\bigg|\int_{0}^{\infty}\int_{\Omega}g(s)\eta_{n,s}\overline{u_n}dxds\bigg|^2&=&\bigg|-\int_{0}^{\infty}\int_{\Omega}g'(s)\eta_{n}(x,s)\overline{u_n}(x)dxds\bigg|^2\cr
&\leq&k_0^2\bigg|\int_{0}^{\infty}\int_{\Omega}g(s)\eta_{n}(x,s)\overline{u_n}(x)dx ds\bigg|^2\cr
&\leq&k_0^2\bigg|\int_{0}^{\infty}\big(\int_{\Omega}|g(s)\eta_n(x,s)|^2 dx\big)^\frac{1}{2}\big(\int_{\Omega}|u_n(x)|^2 dx\big)^{\frac{1}{2}}ds\bigg|^2\cr
&\leq&k_0^2\|u_n\|_{L^2(\Omega)}^2\int_{0}^{\infty}g(s)ds\int_{0}^{\infty}\int_{\Omega}g(s)|\eta_n(x,s)|^2 dxds\cr
&\leq&k_0^2\|u_n\|_{L^2(\Omega)}^2\zeta\int_{0}^{\infty}\int_{\Omega}g(s)|\nabla\eta_n(x,s)|^2 dxds\rightarrow0.
\end{eqnarray}}

Substituting {\color{blue}{\it Observation I} and {\it Observation II}} into $(\ref{e5e-g-u-fir})$ yields that $\int_{0}^{\infty}\int_{\Omega}g(s)u_n\overline{u_n}dxds\rightarrow0$.  Note that
\begin{eqnarray}\nonumber
0\leftarrow\int_{0}^{\infty}\int_{\Omega}g(s)u_n\overline{u_n}dxds&=&\int_{0}^{\infty}g(s)ds\int_{\Omega}|u_n|^2 dx,
\end{eqnarray}
which together with the assumption  (A1) shows that
\begin{align}
\label{un} \|u_n\|_{L^2(\Omega)}\rightarrow0.
\end{align}

\vspace{0.3cm}
\noindent{\it Step 3. $\|\nabla v_{n}\|_{L^2(\Omega)}\to 0,\quad n\to \infty$.}

Substituting $(\ref{e1})$ and $(\ref{e3})$ into $(\ref{e2})$ and $(\ref{e4})$, respectively, we get
\begin{align}
\label{ne2} -\lambda^2_n\rho v_n-[(\alpha-\zeta) \Delta v_{n}-\gamma\beta \Delta p_{n}+\int_{0}^{\infty}g(s)\Delta \eta_n(x,s)ds]\rightarrow0,~\text{in}~L^2(\Omega),&\\
\label{ne4}  -\lambda^2_n \mu p_n-(\beta\Delta p_{n}-\gamma\beta \Delta v_{n})\rightarrow0,~\text{in}~L^2(\Omega).&
\end{align}

Note that thanks to $\alpha_1=\alpha-\gamma^2\beta>0$,   $(\ref{ne2})$ and $(\ref{ne4})$  can be transformed into the following form, respectively.
\begin{eqnarray}
&&-\lambda^2_n\rho v_n-(\alpha_1-\zeta) \Delta v_{n}-\gamma\beta(\gamma \Delta v_{n}- \Delta p_{n})
-\int_{0}^{\infty}g(s)\Delta \eta_n(x,s)ds\rightarrow0,~\text{in}~L^2(\Omega),\label{ne21}\\
&&-\lambda^2_n\gamma \mu p_n+\gamma\beta( \gamma\Delta v_{n}-\Delta p_{n})\rightarrow0,~\text{in}~L^2(\Omega).\label{ne41}
\end{eqnarray}

Thus, adding (\ref{ne21}) and (\ref{ne41}) so as to eliminate the common item $\gamma\beta(\gamma\Delta v_{n}-\Delta p_{n})$, we obtain
\begin{align}
\label{ne2ne4} &\lambda^2_n\rho v_n+\lambda^2_n\gamma \mu p_n+(\alpha_1-\zeta)\Delta v_{n}+\int_{0}^{\infty}g(s)\Delta \eta_n(x,s)ds\rightarrow0,~~\text{in}~L^2(\Omega).
\end{align}

Then, taking the  $L^2-$inner product of $(\ref{ne2ne4})$ with $v_n$, along with  the boundary conditions $(\ref{d-n})$, we have
\begin{equation}
\label{ne2ne4-1}\int_{\Omega}\lambda^2_n\rho |v_n|^2dx+\int_{\Omega}\lambda^2_n\gamma \mu p_n\overline{v_n}dx-\int_{\Omega}(\alpha_1-\zeta) |\nabla v_{n}|^2dx-\int_{0}^{\infty}\int_{\Omega}g(s)\nabla \eta_n\cdot \nabla \overline{v_n}dxds\rightarrow0.
\end{equation}

By $(\ref{e1})$, $(\ref{e3})$ and Cauchy-Schwartz inequality, we get that the second term in (\ref{ne2ne4-1}) satisfies
\begin{align}
\label{lam-p-v}\bigg|\int_{\Omega}\lambda^2_n\gamma \mu p_n\overline{v_n}dx\bigg|^2\sim\bigg|\gamma \mu\int_{\Omega}q_n\overline{u_n}dx\bigg|^2\leq(\gamma\mu)^2\int_{\Omega}|u_n|^2dx\int_{\Omega}|q_n|^2dx\rightarrow 0.
\end{align}
{Here} we have used (\ref{un}) and the boundedness of $\|q_n\|_{L^2({\color{blue}\Omega})}$.

By Cauchy-Schwartz inequality, we have
{\begin{align}
\label{eta-v}\bigg|-\int_{0}^{\infty}\int_{\Omega}g(s)\nabla \eta_n\cdot \nabla \overline{v_n}dxds\bigg|^2
&\leq\bigg|\int_{0}^{\infty}\big(\int_{\Omega}|g(s)\nabla \eta_n|^2 dx\big)^{\frac{1}{2}} \big(\int_{\Omega}|\nabla v_n|^2dx\big)^{\frac{1}{2}}ds\bigg|^2\cr
&\leq \|\nabla v_{n}\|_{L^2(\Omega)}^2\zeta\int_{0}^{\infty}\int_{\Omega}g(s)|\nabla \eta_n|^2dxds\cr
&\to 0.
\end{align}}
This together with \eqref{w0}, $(\ref{ne2ne4-1})$ and $(\ref{lam-p-v})$ shows that
\begin{align}
\nonumber  &\int_{\Omega}\lambda^2_n\rho |v_n|^2dx-\int_{\Omega}(\alpha_1-\zeta) |\nabla v_{n}|^2dx\rightarrow0,
\end{align}
which along with (\ref{e1}) and (\ref{un}) leads to
\begin{align}
\label{vn} &\|\nabla v_{n}\|_{L^2(\Omega)}\rightarrow0.
\end{align}

\vspace{0.3cm}
\noindent{\it Step 4. $\|q_n\|_{L^2(\Omega)}\to 0,\quad n\to \infty.$}
\vspace{0.2cm}

Taking the $L^2-$ inner product of (\ref{ne2ne4}) with $p_n$ and integrating it by parts, along with the boundary conditions (\ref{d-n}), we obtain
\begin{equation}\label{434}
\int_{\Omega}\lambda^2_n\rho v_n \overline{p_n}dx+\int_{\Omega}\lambda^2_n\gamma \mu |p_n|^2dx-\int_{\Omega}(\alpha_1-\zeta) \nabla v_{n}\cdot \nabla\overline{p_{n}}dx
-\int_{0}^{\infty}\int_{\Omega}g(s)\nabla \eta_n\cdot \nabla \overline{p_n}dxds\to0.
\end{equation}

On the one hand, we claim that
\begin{equation}\label{435}
\int_{\Omega}\lambda^2_n\rho v_n \overline{p_n}dx\to0.
\end{equation}
Indeed, by (\ref{e1}) and (\ref{un}), we know that $\|\lambda_nv_n\|_{L^2(\Omega)}\to 0$, which together with H\"{o}lder inequality, (\ref{e3}) {and the boundedness of $\|q_n\|_{L^2(\Omega)}$}, yields (\ref{435}).

On the other hand,  using H\"{o}lder inequality again, along with \eqref{w0}, (\ref{vn}) and the boundedness of $\|\nabla p_n\|_{L^2(\Omega)}$, we obtain that the last two terms in (\ref{434}) satisfy  $\int_{\Omega}(\alpha_1-\zeta) \nabla v_{n}\cdot \nabla\overline{p_{n}}dx\to 0,$ and $\int_{0}^{\infty}\int_{\Omega}g(s)\nabla \eta_n\cdot \nabla \overline{p_n}dxds\to 0$.

Hence, by (\ref{434}), we get
\begin{equation}\nonumber
\int_{\Omega}\lambda^2_n\gamma \mu |p_n|^2dx\to 0,
\end{equation}
which along with (\ref{e3}) leads to
\begin{equation}\label{qn+}
\|q_n\|_{L^2(\Omega)}\to 0.
\end{equation}

\vspace{0.3cm}
\noindent{\it Step 5. $\|\gamma \nabla v_{n}-\nabla p_{n}\|_{L^2(\Omega)},\quad n\to \infty.$}
\vspace{0.2cm}

Taking the $L^2$-inner product  of $(\ref{ne2})$ and $(\ref{ne4})$ with $v_n$ and $p_n$, respectively, and using $(\ref{d-n})$, then integrating by parts yields that
\begin{align}
\label{ne2i} &-\int_{\Omega}\lambda^2_n\rho|v_n|^2 dx+\int_{\Omega}(\alpha-\zeta)|\nabla v_{n}|^2 dx-\int_{\Omega}\gamma\beta\nabla\overline{v_{n}}\cdot\nabla p_{n}dx\cr
&\qquad+\int_{0}^{\infty}\int_{\Omega}g(s)\nabla \eta_n\cdot \nabla \overline{v_n}dxds\rightarrow0,~\text{in}~L^2(\Omega),\\
\label{ne4i} &-\int_{\Omega} \lambda^2_n \mu|p_n|^2dx+\int_{\Omega}\beta|\nabla p_{n}|^2 dx-\int_{\Omega}\gamma\beta \nabla v_{n}\cdot\nabla\overline{p_{n}}dx\rightarrow0,~\text{in}~L^2(\Omega).
\end{align}

Adding $(\ref{ne2i})$ and $(\ref{ne4i})$, and  by virtue of $(\ref{e1})$, $(\ref{e3})$, \eqref{eta-v} and $\alpha_1=\alpha-\gamma^2\beta>0$, we obtain
{\begin{align}
\label{1} \int_{\Omega}(\alpha_1-\zeta)|\nabla v_{n}|^2 dx-\int_{\Omega}\rho|u_n|^2 dx+\int_{\Omega}\beta|\gamma\nabla v_{n}-\nabla p_{n}|^2dx-\int_{\Omega}\mu|q_n|^2 dx\rightarrow0.
\end{align}}

{Substituting $(\ref{un})$, $(\ref{vn})$ and \eqref{qn+} into $(\ref{1})$ gives that

\begin{align}\nonumber
\label{qn} & \int_{\Omega}\beta|\gamma \nabla v_{n}-\nabla p_{n}|^2dx\rightarrow0,
\end{align}
that is, $\|\gamma \nabla v_{n}-\nabla p_{n}\|_{L^2(\Omega)}\rightarrow 0$.

Summing up the above five steps, we have obtained  that $\|X_n\|_{\mathcal{H}}\rightarrow0$, which contradicts the fact $\|X_n\|_{\mathcal{H}}=1$. Thus, the condition (\ref{403}) in Lemma \ref{l-4-1} has been verified.

\vspace{0.5cm}
\noindent{\bf Part II.  We show that $i\mathbb{R}\subset\varrho(\mathcal{A})$.}

The proof by contradiction is still employed.
Suppose that $i\mathbb{R}\nsubseteq\varrho(\mathcal{A})$.
Thus, due to $0\in \varrho(\mathcal{A})$ and $\varrho(\mathcal{A})$ is an open set, we have
$$
0<\hat \lambda<\infty,
$$
in which
$$
\hat\lambda:=\sup\{R>0\; :\; [-Ri,Ri]\subset \varrho(\mathcal{A})\}.
$$
Thanks to Banach-Steinhaus theorem,  there exist  sequences $X_n=(v_n,u_n,p_n,q_n,\eta_n)\in \mathcal{D}(\mathcal{A})$ with $\|X_n\|_{\mathcal{H}}=1$ and $\lambda_n\rightarrow \hat{\lambda}$ such that
\begin{equation}\nonumber
(i\lambda_n-\mathcal{A})X_n\rightarrow 0~\text{in}~\mathcal{H},
\end{equation}
that is,  (\ref{e1})--(\ref{e5}) hold for $\lambda_n\rightarrow \hat{\lambda}$,
with the boundary conditions  $(\ref{d-n})$.

Note that {\it Step 1-5} in {\bf Part I} still holds for $\lambda_n\rightarrow \hat{\lambda}$.  Thus, following these five steps,  we can also achieve the contradiction   $\|X_n\|_{\mathcal{H}}=o(1)$, and hence $i\mathbb{R}\subset\varrho(\mathcal{A})$ holds.

Therefore, by {\bf Part I}, {\bf II} along with Lemma \ref{l-4-1}, the result in Theorem \ref{t-4-1} holds. The proof is completed.\hfill$\Box$

}

\subsection{Stability analysis of nonlinear system \eqref{sys2} or \eqref{eq2.8}}
\qquad
In this section, we  prove the exponential stability of the nonlinear system \eqref{sys2} with small initial data.
Inspired by \cite{freitas} and \cite{matf}, we assume that

{\color{blue}(H3)} There exists a {G\^{a}teaux differentiable functional $F:(H_{\Gamma_0}^1(\Omega))^2\rightarrow \mathbb{C}^2$ {satisfying $F(0,0)=0$} such that  $$\nabla F(v,p)=(f_1(v,p),f_2(v,p)),$$ where $f_1,f_2$ are the nonlinear source terms in system \eqref{sys1}. It  should be mentioned that $\nabla F$ denotes the unique vector representing the G\^{a}teaux  derivative $DF$ in the Riesz isomorphism.

Note that  Theorem \ref{t-4-1}  indicates that the semigroup $e^{\mathcal{A}t}$ associated with the linear part of system (\ref{sys2}) is exponentially stable on $\mathcal{H}$,  that is, there exist $\hat{M}>0$ and $\hat{\omega}>0$ such that
\begin{align}
\label{e-s}\|e^{\mathcal{A}t}\|_\mathcal{H}\leq \hat{M} e^{-\hat{\omega} t},~t\geq0.
\end{align}
Based on (\ref{e-s}), we have the following result on the stability of the nonlinear system (\ref{sys2}) with initial value satisfying $\|X_0\|_\mathcal{H}\leq \rho_0$, where $\rho_0>0$ is any given constant.
\begin{theorem}\label{th-stability}
Assume that {\color{blue}(H3)} holds, and $f_i,\; i=1,2$ satisfy {\color{blue}(H1) and (H2)} such that  for any $\|X\|_{\mathcal{H}},\|\widetilde{X}\|_{\mathcal{H}}\leq C_{\rho_0}$, there exists a constant  $L({C_{\rho_0}})<\frac{\hat{\omega}}{\hat{M}}$ satisfying
\begin{align}\label{loc-lip}
\|\mathcal{F}(X)-\mathcal{F}(\widetilde{X})\|_{\mathcal{H}}\leq L({C_{\rho_0}})\|X-\widetilde{X}\|_{\mathcal{H}}.
\end{align}
Then, the solution to system \eqref{sys2} decays to zero exponentially for every initial value $X_0\in \mathcal{H}$ satisfying $\|X_0\|_\mathcal{H}\leq \rho_0$.
\end{theorem}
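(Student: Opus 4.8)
The plan is to treat the nonlinearity $\mathcal{F}$ as a small Lipschitz perturbation of the exponentially stable linear flow, running the mild-solution representation \eqref{sol} against the decay bound \eqref{e-s} from Theorem \ref{t-4-1} and closing the estimate with Gronwall's inequality inside a continuity (bootstrap) argument. The first thing I would record is that $\mathcal{F}$ vanishes at the origin: the right-hand side of the growth condition \eqref{li-gro} is $c(|v|+|p|)(|v|^r+|p|^r+1)$, which equals $0$ at $v=p=0$, so $f_1(0,0)=f_2(0,0)=0$ and hence $\mathcal{F}(0)=0$. This is consistent with (H3), where $F(0,0)=0$ and $\nabla F=(f_1,f_2)$ force $\nabla F(0,0)=0$. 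Consequently, as long as the trajectory stays in the ball of radius $C_{\rho_0}$, the local Lipschitz estimate \eqref{loc-lip} upgrades to $\|\mathcal{F}(X(s))\|_{\mathcal{H}}=\|\mathcal{F}(X(s))-\mathcal{F}(0)\|_{\mathcal{H}}\le L(C_{\rho_0})\|X(s)\|_{\mathcal{H}}$.

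Next I would set up the bootstrap. Starting from the local solution provided by Theorem \ref{th-sol} and the hypothesis $\|X_0\|_{\mathcal{H}}\le\rho_0$, define
\[
T^\ast:=\sup\{\,t\ge 0:\ \|X(s)\|_{\mathcal{H}}\le C_{\rho_0}\ \text{for all}\ s\in[0,t]\,\}.
\]
On $[0,T^\ast)$ the Lipschitz bound above applies, so taking $\|\cdot\|_{\mathcal{H}}$ in \eqref{sol} and inserting \eqref{e-s} gives
\[
\|X(t)\|_{\mathcal{H}}\le \hat M e^{-\hat\omega t}\|X_0\|_{\mathcal{H}}+\hat M L(C_{\rho_0})\int_0^t e^{-\hat\omega(t-s)}\|X(s)\|_{\mathcal{H}}\,ds.
\]
Setting $\phi(t)=e^{\hat\omega t}\|X(t)\|_{\mathcal{H}}$ reduces this to $\phi(t)\le \hat M\|X_0\|_{\mathcal{H}}+\hat M L(C_{\rho_0})\int_0^t\phi(s)\,ds$, and Gronwall's inequality then yields
\[
\|X(t)\|_{\mathcal{H}}\le \hat M\|X_0\|_{\mathcal{H}}\,e^{-(\hat\omega-\hat M L(C_{\rho_0}))t},\qquad t\in[0,T^\ast).
\]
Here the structural assumption $L(C_{\rho_0})<\hat\omega/\hat M$ is precisely what makes the exponent $\hat\omega-\hat M L(C_{\rho_0})$ strictly positive, so the right-hand side already displays the claimed exponential decay on $[0,T^\ast)$.

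The main obstacle, and the only delicate point, will be proving that $T^\ast=\infty$, i.e. that the trajectory never escapes the ball on which the Lipschitz constant is controlled. The displayed estimate gives the uniform a priori bound $\|X(t)\|_{\mathcal{H}}\le \hat M\rho_0$ on $[0,T^\ast)$, so it suffices to calibrate the data so that $\hat M\rho_0<C_{\rho_0}$; this is exactly where the smallness of $\|X_0\|_{\mathcal{H}}$ and the freedom in choosing $C_{\rho_0}$ enter, the latter being consistent with $L(C_{\rho_0})<\hat\omega/\hat M$ on a suitably small ball by virtue of (H1)--(H2). If $T^\ast<\infty$, continuity of $t\mapsto\|X(t)\|_{\mathcal{H}}$ would force $\|X(T^\ast)\|_{\mathcal{H}}=C_{\rho_0}$, contradicting $\|X(T^\ast)\|_{\mathcal{H}}\le \hat M\rho_0<C_{\rho_0}$; hence $T^\ast=\infty$. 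This simultaneously delivers global existence and the global bound $\|X(t)\|_{\mathcal{H}}\le \hat M\|X_0\|_{\mathcal{H}}e^{-(\hat\omega-\hat M L(C_{\rho_0}))t}$ for all $t\ge 0$, finishing the proof. A variant closer to an \emph{energy method} would instead differentiate a perturbed functional such as $\mathcal{E}(t)=\frac{1}{2}\|X(t)\|_{\mathcal{H}}^2+\int_\Omega F(v,p)\,dx$, using (H3) to package the source terms into a total potential and the dissipation identity \eqref{E} for the memory; however, the perturbative mild-solution route above is shorter and transfers the linear rate from Theorem \ref{t-4-1} to the nonlinear problem directly.
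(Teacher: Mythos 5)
Your proposal is correct in its essentials, but it reaches the crucial a priori bound by a genuinely different route than the paper. The shared part is the endgame: both you and the paper write the mild solution \eqref{sol}, insert the linear decay bound \eqref{e-s}, use $\mathcal{F}(0)=0$ together with \eqref{loc-lip} to get $\|\mathcal{F}(X(s))\|_{\mathcal{H}}\le L(C_{\rho_0})\|X(s)\|_{\mathcal{H}}$, and apply Gronwall to $e^{\hat\omega t}\|X(t)\|_{\mathcal{H}}$, arriving at exactly \eqref{exp}. The difference is how one guarantees that the trajectory stays in the ball $\|X(t)\|_{\mathcal{H}}\le C_{\rho_0}$ on which the Lipschitz hypothesis applies. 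The paper does this with a Lyapunov argument: it uses (H3) to build the nonlinear energy $E(t)$ in \eqref{ene}, shows $E'(t)\le 0$, and sandwiches $E$ between multiples of $\|X\|_{\mathcal{H}}^2$ via \eqref{d-m-v-fur2} (a step that tacitly imposes the extra smallness condition $c<\frac{1}{8l_0\delta}$ on the constant from (H2)), which yields the global bound with $C_{\rho_0}=\sqrt{\widetilde{k}_0/k_0}\,\rho_0$, a radius independent of the semigroup constant $\hat{M}$. You instead run a continuity/bootstrap argument that never touches (H3) or the dissipation identity; it needs only $\mathcal{F}(0)=0$ and the local Lipschitz bound, which is a real economy of hypotheses and is sound, but two points must be firmed up. First, $C_{\rho_0}$ cannot be left as a ``free'' parameter, because $L(C_{\rho_0})$ is tied to the ball radius: fix it explicitly, e.g.\ $C_{\rho_0}:=2\hat{M}\rho_0$, so that the assumed inequality is $L(2\hat{M}\rho_0)<\hat\omega/\hat{M}$; since $\hat{M}\ge 1$ and may be large, your ball is inflated by the factor $\hat{M}$ relative to the paper's, so your version of the Lipschitz hypothesis is in general more demanding --- this is precisely what the paper's energy route buys (a radius not depending on $\hat{M}$, plus a monotone functional that its subsequent Remark reuses to deduce decay of $E(t)$ itself). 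Second, the escape-time argument needs the solution to persist as long as it remains bounded; invoke Theorem \ref{th-sol} (or the standard continuation principle for locally Lipschitz $\mathcal{F}$) so that $T^\ast<\infty$ together with $\|X(T^\ast)\|_{\mathcal{H}}\le \hat{M}\rho_0<C_{\rho_0}$ genuinely contradicts the maximality of $T^\ast$.
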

}
\begin{proof}
We now define the total energy functional $E(t)$ of the system \eqref{sys2} by
\begin{align}\label{ene}
E(t)&=\frac{1}{2}\big[\int_{\Omega}(\alpha_1-\zeta)|\nabla v|^2dx+\int_{\Omega}\rho| u|^2dx+\int_{\Omega}\beta|\gamma \nabla v-\nabla p|^2dx+\int_{\Omega}\mu |q|^2dx\cr
&+\int_{0}^{\infty}\int_{\Omega}g(s)|\nabla\eta(x,s)|^2dxds\big]-\int_{\Omega}F(v,p)dx.
\end{align}

Note that, apart from the last term in (\ref{ene}), $E(t)$ is the natural energy for the linear part of \eqref{sys2}. {By taking the $L^2$-inner product of \eqref{sys2}$_1$ and \eqref{sys2}$_2$  with $v_t$ and $p_t$, respectively, then integrating by part and using $(\ref{d-n})$ yield that}
$$E'(t)=\int_{0}^{\infty}\int_{\Omega}g'(s)|\nabla \eta|^2dxds\leq 0,$$
which implies that $E(t)\leq E(0)$.

We mainly estimate the last term of $E(t)$. In view of the differential mean value inequality, {along with the assumption {\color{blue}(H3)}, we have
\begin{align}\label{d-m-v}
\big|\int_{\Omega}F(v,p)dx\big|&\leq \int_{\Omega}|\nabla F(\theta(v,p))||(v,p)|dx\cr
&\leq  c\int_{\Omega}(|f_1(\theta(v,p))|+|f_2(\theta(v,p))|)(|v|+|p|)dx,~\theta \in (0,1).
\end{align}

{Thanks to the assumption \eqref{li-gro} in {\color{blue}(H2)},  combining  \eqref{d-m-v}  with H\"{o}lder and  Poincar\'{e} inequality, we have
\begin{align}\nonumber
\big|\int_{\Omega}F(v,p)dx\big|&\leq 4c\int_{\Omega}(|v|^{r-1}+| p|^{r-1}+1)(|v|^2+| p|^2)dx\cr
&\leq 4cl_0\big(\int_{\Omega}|\nabla v|^2+|\nabla p|^2dx\big).
\end{align}
}} Then,
\begin{align}\label{d-m-v-fur2}
\big|\int_{\Omega}F(v,p)dx\big|&\leq 4cl_0\delta\int_{\Omega}\big[(\alpha_1-\zeta)|\nabla v|^2+\beta|\gamma\nabla v-\nabla p|^2\big]dx,
\end{align}
where $\delta=\max\{\frac{\gamma^2+1}{\alpha_1-\zeta},\frac{1}{\beta}\}$. Therefore, substituting \eqref{d-m-v-fur2} into \eqref{ene} shows that
\begin{align}\label{ene-2}
E(t)&\geq \frac{1}{2}\big[\int_{\Omega}(\alpha_1-\zeta)|\nabla v|^2dx+\int_{\Omega}\rho| u|^2dx+\int_{\Omega}\beta|\gamma \nabla v-\nabla p|^2dx+\int_{\Omega}\mu |q|^2dx\cr
&+\int_{0}^{\infty}\int_{\Omega}g(s)|\nabla\eta(x,s)|^2dxds\big]-4cl_0\delta\int_{\Omega}\big[(\alpha_1-\zeta)|\nabla v|^2+\beta|\gamma\nabla v-\nabla p|^2\big]dx.
\end{align}
Choosing $c<\frac{1}{8l_0\delta}$ and denoting $k_0=1-8cl_0\delta>0$, then it yields that
\begin{align}\label{ene-3}
E(t)&\geq \frac{k_0}{2}\|X\|_\mathcal{H}^2,~t\in \mathbb{R}_+.
\end{align}
Note that  by (\ref{d-m-v-fur2}) and (\ref{ene-2}), choosing $t=0$, we obtain that
\begin{equation}\label{451}
E(0)\leq\frac{\widetilde{k}_0}{2}\|X_0\|_\mathcal{H}^2, \quad \mbox{where}\quad \widetilde{k}_0=1+8cl_0\delta.
\end{equation} Thus, by (\ref{ene-3}) and  (\ref{451}), we have
{\begin{align}\nonumber
\|X\|_\mathcal{H}\leq \sqrt{\frac{\widetilde{k}_0}{k_0}}\|X_0\|_\mathcal{H}\leq\sqrt{\frac{\widetilde{k}_0}{k_0}}\rho_0=:C_{\rho_0},~t\in \mathbb{R}_+.
\end{align}
}
Let us recall \eqref{sol}, direct calculation yields that
\begin{eqnarray}\nonumber
\|X(t)\|_{\mathcal{H}}\leq \hat{M}e^{-\hat{\omega} t}\|X_0\|_{\mathcal{H}}+\hat{M}L(C_{\rho_0}) e^{-\hat{\omega} t}\int_{0}^{t}e^{\hat{\omega} s}\|X(s)\|_{\mathcal{H}}ds,
\end{eqnarray}
and hence,
\begin{eqnarray}\label{e-a+}
e^{\hat{\omega} t}\|X(t)\|_{\mathcal{H}}\leq \hat{M}\|X_0\|_{\mathcal{H}}+\hat{M}L(C_{\rho_0})\int_{0}^{t}e^{\hat{\omega} s}\|X(s)\|_{\mathcal{H}}ds.
\end{eqnarray}
{Applying} Gronwall's inequality to \eqref{e-a+}, we obtain
\begin{equation}\label{exp}
\|X(t)\|_{\mathcal{H}}\leq \hat{M}e^{-[\hat{\omega}-\hat{M}L(C_{\rho_0})]t}\|X_0\|_{\mathcal{H}},~t\in\mathbb{R}_+.
\end{equation}
\end{proof}
\begin{remark}
Based on the proof of Theorem \ref{th-stability}, the decay rate of the energy $E(t)$ (given as in (\ref{ene})) associated {\color{blue}with} the nonlinear system (\ref{sys2})  can also be estimated.
In fact, thanks to the estimation \eqref{d-m-v-fur2}, there exists a constant $\hat{C}>0$ such that
$
E(t)\leq \hat{C}\|X\|_{\mathcal{H}}^2,
$
which together with (\ref{exp}) implies the exponential decay of $E(t)$.
\end{remark}

\section{Conclusions}
\noindent

In this paper, we considered a kind of multi-dimensional nonlinear piezoelectric beam system subject to only one viscoelastic infinite memory in the elasticity equation. Under appropriate Hilbert settings and the suitable conditions on nonlinear source terms, the existence and uniqueness of global {solution} of the nonlinear system were shown by the semigroup theories and fixed point theorem. We further proved that its solution decays  exponentially  for small initial value based on frequency domain analysis and energy estimates. Especially, this decay rate is irrelevant to the relationships of the wave speeds $\frac{\rho}{\alpha}$ and $\frac{\mu}{\beta}$,  which is totally different from the well-known Timoshenko beam.

One promising research problem is to investigate the large time behaivor of the following abstract coupled system:
\begin{equation}\label{abstract}
\left\{
\begin{array}{ll}
 v_{tt}(t)+a_1A v(t)-\kappa A^\beta p(t)-\int_{0}^{\infty}g(s)A^\alpha v(t-s)ds=0,\\
p_{tt}(t)+a_2A p(t)-\kappa A^\beta v(t)=0,~\alpha,\beta\in[0,1),
\end{array}
\right.
\end{equation}
  where $A$ is a positive self-adjoint operator.  {When $\alpha=\beta=1$ and $A=-\Delta$, the system becomes a concrete one, that is, the linear part of system (\ref{sys1}).}  From this view, system (\ref{abstract}) is a more generalized case compared to system (\ref{sys1}). Note that the choices of $\alpha$ and $\beta$ are related to the memory properties and coupling properties, respectively. Based on the previous results on weakly coupled PDEs systems and single second order systems with memories, it is reasonable to predict that the coupling and memory properties can simultaneously affect the stability of the system (\ref{abstract}).  Thus, in a future work, we shall identify how the choices of  $\alpha$ and $\beta$ determine the decay rates of the solutions to system (\ref{abstract}).

{\subsection*{Acknowledgement}{\rm The authors would like to appreciate the AE and  referees  for their helpful and valuable comments and suggestions.}}
\subsection*{Statements and Declarations}{\rm The authors declare that they have no conflict of interest.}


\begin{thebibliography}{99}
	
	{\bibitem{Ammar} F. Ammar-Khodja, A. Benabdallah, J. E. Mu$\tilde{n}$oz Rivera and R. Racke, Energy decay for Timoshenko systems of memory type, \emph{J. Differ. Equations}, 194 (1) (2003) 82--115.
		
		\bibitem{alm} A. M. Al-Mahdi, M. M. Al-Gharabli, A. Guesmia and S. A. Messaoudi, New decay results for a viscoelastic-type Timoshenko system with infinite memory, \emph{Z. Angew. Math. Phys.}, 72 (2021) 22.}
	
	
	
	
	\bibitem{brez} H. Brezis, Functional Analysis, Sobolev Spaces and Partial Differential Equations, \emph{Springer}, New York, 2011.
	
	
	\bibitem{ch2} B. Chentouf and A. Guesmia, Well-posedness and stability results for the Korteweg-de Vries-Burgers and Kuramoto-Sivashinsky equations with infinite memory: A history approach, \emph{Nonlinear Anal. Real. World Appl.}, 65 (2022) 103508.
	
	
	\bibitem{cd} C. Dagdeviren, P. Joe, O. L. Tuzman, K. Park,
	K. J. Lee, Y. Shi, Y. G. Huang and J. A. Rogers, Recent progress in flexible and stretchable piezoelectric devices for mechanical energy harvesting, sensing and actuation, \emph{Extreme Mech. Lett.}, 9 (2016) 269--281.
	
	
	\bibitem{dafermos} C. Dafermos, Asymptotic stability in viscoelasticity, \emph{Arch. Ration. Mech. Anal.}, 37 (1970) 297--308.
	
	
	
	\bibitem{freitas} M. M. Freitas,  A. J. A. Ramos, A. \"{O}. \"{O}zer, and D. S. Almeida J\'{u}nior,  Long-time dynamics for a fractional piezoelectric system with magnetic effects and Fourier's law, \emph{J. Differ. Equations}, 280 (2021) 891--927.
	
	
	\bibitem{fre} M. M. Freitas, A. J. A. Ramos, M. J. D Santos and J. L. L Almeida, Dynamics of piezoelectric beams with magnetic effects and delay term, \emph{Evol. Equ. Control Theory}, 11 (2) (2022) 583--603.
	
	{\bibitem{Gear}
		L. Gearhart, Spectral theory for contraction semigroups on Hilbert space,
		{\it Trans. Amer. Math. Soc.}, 236 (1978) 385--394.}
	
	{\bibitem{huangfalun}
		F. L. Huang, Characteristic conditions for exponential stability of linear dynamical systems in Hilbert spaces, {\it Ann. Differential Equations}, 1 (1) (1985) 43--56.}
	
	
	\bibitem{Kh} H. E. Khochemane, A. Djebabla, S. Zitouni and L. Bouzettouta, Well-posedness and general decay of a nonlinear damping porous-elastic system with infinite memory, \emph{J. Math. Phys.}, 61 (2020) 021505.
	
	
	
	\bibitem{matf} T. F. Ma and R. N. Monteiro, Singular limit and long-time dynamics of Bresse systems, \emph{SIAM J. Math. Anal.}, 49 (4) (2017) 2468--2495.
	
	
	
	\bibitem{ma1} K. A. Morris and A. \"{O}. \"{O}zer, Strong stabilization of piezoelectric beams with magnetic effects, \emph{IEEE Conference on Decision $\&$ Control}, (2013) 3014--3019.
	
	\bibitem{ma2} K. A. Morris and A. \"{O}. \"{O}zer, Modeling and stabilizability of voltage-actuated piezoelectric beams with magnetic effects, \emph{ SIAM J. Control Optim.}, 52 (2014) 2371--2398.
	
	\bibitem{ma3} K. A. Morris and A. \"{O}. \"{O}zer, Comparison of stabilization of current-actuated and voltage-actuated piezoelectric beams,
	\emph{IEEE Conference on Decision $\&$ Control}, (2014) 571--576.
	
	\bibitem{mus} M. I. Mustafa and S. A. Messaoudi, General energy decay rates for a weakly damped Timoshenko system, \emph{J. Dyn. Control Syst.}, 16 (2) (2010) 211--226.
	
	\bibitem{sn1} S. Nakagiri, Optimal Control of Linear Retarded Systems in Banach Spaces, \emph{J. Math. Anal. Appl.}, 120 (1986) 169--210.
	
	\bibitem{pazy} A. Pazy, Semigroups of Linear Operators and Applications to Partial Differential Equations, \emph{Applied Mathematical Sciences}, Springer-Verlag, New York, 1983.
	
	
	\bibitem{pao} A. Paolucci and C. Pignotti, Exponential decay for semilinear wave equations with viscoelastic damping and delay feedback, \emph{Math. Control Signals Syst.}, 33 (2021) 617--636.
	
	{		\bibitem{pruss}
		J. Pr\"{u}ss, On the spectrum of $C_0$-semigroups, {\it Trans. Amer. Math. Soc.}, 284 (2) (1984) 847--857.}
	
	
	\bibitem{am4} A. J. A. Ramos, M. M. Freitas, D. S. Almeida J\'{u}nior, S. S. Jesus and T. R. S. Moura, Equivalence between exponential
	stabilization and boundary observability for piezoelectric beams with magnetic effect, \emph{Z. Angew. Math. Phys.}, 70 (2019) 60.
	
	
	
	\bibitem{anderson1} A. J. A. Ramos, Cledson S. L. Gon\c{c}alves
	and Silv\'{e}rio S. Correa N\^{e}to, Exponential stability and numerical treatment for
	piezoelectric beams with magnetic effect, \emph{ESAIM---Math. Model. Num.}, 52 (2018) 255--274.
	
	\bibitem{anderson2} A. J. A. Ramos, A. \"{O}. \"{O}zer, M. M. Freitas, D. S. Almeida J\'{u}nior and J. D. Martins, Exponential stabilization of fully dynamic and electrostatic piezoelectric beams with
	delayed distributed damping feedback, \emph{Z. Angew. Math. Phys.}, 72 (2021) 26.
	
	\bibitem{Rivera} J. E. Mu\~{n}oz Rivera and R. Racke, Mildly dissipative nonlinear Timoshenko systems---global existence and exponential
	stability, \emph{J. Math. Anal. Appl.}, 276 (2002) 248--278.
	
	
	\bibitem{smith} R. C. Smith, Smart material systems: Model Development, \emph{SIAM}, Philadelphia, 2005.
	
	
	
	\bibitem{gwess} M. Tucsnak and G. Weiss, Observation and control for operator semigroups, \emph{Birkh\"{a}user}, Basel-Boston-Berlin, 2009.
	
	
	
	{\bibitem{wangjunmin} J. M. Wang and B. Z. Guo, On dynamic behavior of a hyperbolic system derived
		from a thermoelastic equation with memory type,  \emph{J. Franklin Inst.}, 344 (2007) 75--96.}
	
	
	\bibitem{yang} J. Yang, A review of a few topics in piezoelectricity, \emph{Appl. Mech. Rev.}, 59 (2006) 335--345.
	
	
	
\end{thebibliography}
\end{document}